\numberwithin{equation}{section}
\newcommand{\lan}{\langle}
\newcommand{\ran}{\rangle}
\newcommand{\be}{\begin{eqnarray*}}
\newcommand{\bel}{\begin{eqnarray}}
\newcommand{\ee}{\end{eqnarray*}}
\newcommand{\eel}{\end{eqnarray}}
\newcommand{\ba}{\begin{aligned}}
\newcommand{\ea}{\end{aligned}}
\newcommand{\de}{\Delta}
\newcommand{\pa}{\partial}
\newcommand{\pn}{\phi_{\neq}}
\newcommand{\pz}{\lan \phi\ran}
\newtheorem{theorem}{Theorem}[section]
\newtheorem{lemma}[theorem]{Lemma}
\newtheorem{proposition}[theorem]{Proposition}
\newtheorem{corollary}[theorem]{Corollary}
\theoremstyle{definition}
\newtheorem{assumption}{Assumption}[section]
\theoremstyle{definition}
\newtheorem{example}{Example}[section]
\newtheorem{remark}{Remark}[section]
\newcommand{\fn}{f_{\neq}}
\newcommand{\fz}{\lan f\ran}
\newcommand{\norm}[1]{\left\lVert#1\right\rVert}
\newcommand{\set}[1]{\left\{#1\right\}}
\newcommand\N{{\mathbb N}}
\newcommand\R{{\mathbb R}}
\newcommand\T{{\mathbb T}}
\newcommand\Z{{\mathbb Z}}
\newcommand{\cS}{\mathcal{S}}
\newcommand{\ZZ}{\mathbb{Z}}
\newcommand{\TT}{\mathbb{T}}
\newcommand{\rn}{\rho_{\neq}}
\newcommand{\rz}{\langle \rho\rangle}
\newcommand{\cz}{\langle c\rangle}
\newcommand{\cn}{c_{\neq}}
\def\e{{\rm e}}
\def\dd{{\rm d}}
\def\sign{{\rm sign}}
\def\ddt{{\frac{\dd}{\dd t}}}
\def\R {\mathbb{R}}
\def\ZZ {\mathbb{Z}}
\def\Re{{\rm Re}}
\def\Im{{\rm Im}}
\def \l {\langle}
\def \r {\rangle}
\def\T {{\mathbb T}}
\def\de{{\partial}}
\def\N{{\mathbb{N}}}
\begin{document}

\title[Helical flow and applications]{Dissipation enhancement of planar helical flows and applications to three-dimensional Kuramoto-Sivashinsky and  Keller-Segel equations}

\author[]{Yuanyuan Feng}
\email{yzf58@psu.edu}
\address{Department of Mathematics, Penn State University, University Park, PA
16802, USA}

\author[]{Binbin Shi}
\email{binbinshi@sjtu.edu.cn}
\address{School of Mathematical Sciences, Shanghai Jiao Tong University,
Shanghai,
200240, P. R. China}

\author[]{Weike Wang}
\email{wkwang@sjtu.edu.cn}
\address{School of Mathematical Sciences and Institute of Natural Science, Shanghai Jiao Tong University,
Shanghai,
200240, P. R. China}

\begin{abstract}
We introduce the planar helical flows on three dimensional torus and study the dissipation enhancement of such flows.  We then use such flows as transport flows to solve the three dimensional advective Kuramoto-Sivashinsky and Keller-Segel equations. The global well-posedness of the Kuramoto-Sivashinsky equation is achieved when the  linearized operator does not have growing mode in the direction orthogonal to the flow. The global classical solution of the three dimensional Keller-Segel is ensured for any size of the torus with arbitrarily large initial data.
\end{abstract}

\keywords{Planar Helical flow, dissipation enhancement, 3d Kuramoto-Sivashinsky equation, 3d Keller-Segel equation}

\subjclass[2010]{35K25, 35K58, 76E06, 76F25}

\maketitle

\section{Introduction}
Let $v(x_1,x_2,y)$ be divergence free vector field on $(x_1, x_2, y)\in \T^3=[0,L_1]\times[0, L_2]\times[0, L_3]$. We study the dissipation enhancement of the linear advection diffusion equation:
\begin{align}\label{e:diffusion}
\pa_t \theta +v(y)\cdot\nabla \theta +\nu(-\Delta)^\gamma \theta=0\,,
\end{align}
with $\gamma=1$ or $2$. Here $\nu$ represents the strength of the diffusivity. When $\gamma=1$, it is the normal advection diffusion equation. When $\gamma=2$, it describes the advection hyper-diffusion equation.
From a standard energy estimate, the quantity $\norm{\theta}_{L^2}$ will decay at a rate of order $O(\nu^{-1})$. While when $v$ is relaxation enhancing or mixing at some rate, the  $L^2$ norm of the energy will dissipate much faster than $O(\nu^{-1})$. For details, one can read~\cite{Constantin.2008,FengIyer2019dissipation, ZelatiDelgadinoElgindi2020relation}. When $v$ is a shear flow, it will enhance the energy dissipation for the components lying outside the null space of the shear and people have done many works on this~\cite{ZelatiDelgadinoElgindi2020relation,BedrossianZelati2017enhanced,
Wei2019diffusion}.

In this paper, we first introduce the planar helical flow $v$ on $\T^3$, which is defined as
\begin{align}\label{e:helical}
v(y)=(u(y)\sin( 2\pi y /L_3), u(y)\cos (2\pi y/L_3),0)\,,
\end{align}
where $u$ is some smooth periodic function with period $L_3$. People have studied helical flows with diffrerent definitions. For instance in the work~\cite{Lopes2014planar,Ettinger2009global}, they defined helical flows based on some helical symmetry.  In our definition,  we call it `` planar helical" because the trajectory of the flow  wraps around the torus.

For $(x_1, x_2, y)\in \T^3$, we then study the dissipation enhancement of the  advection diffusion equation~\eqref{e:diffusion}  advected by the flow $v$ given in~\eqref{e:helical}.  We will show the above equation will satisfy an enhanced dissipation in the sense of $L^2$ norm for any initial data lying outside of the nullspace of the flow. More precisely, we define the linear operator $H_\nu$ by
\begin{align}\label{e:Hnu}
H_\nu:=\nu(-\Delta)^\gamma+u(y) \sin (2\pi y/L_3) \,\pa_{x_1}  +u(y)\cos(2\pi y/L_3) \,\pa_{x_2}\,,
\end{align}
with $\gamma=1$ or $2$. We will prove under some assumption on $u$,  for the components of the solution orthogonal to the kernel of the transport operator, the corresponding semigroup $e^{-tH\nu}$ will decay with a rate of order $\lambda_\nu$  in the sense of the $L^2$ norm, where $\nu/\lambda_\nu\to 0$ as $\nu\to 0$.  We will also give two examples  of $v$, i.e., $v=(\sin(2\pi y/L_3), \cos(2\pi  y/L_3), 0)$ and $v=(\cos(2\pi y/L_3)\sin(2\pi y/L_3), (\cos (2\pi y/L_3))^2, 0)$, and will check for both choices of $v$ , $\lambda_\nu$ is of order $\nu^{1/(1+\gamma)}$.

We then apply such planar helical flows to the nonlinear equations on $\T^3$, Kuramoto-Sivashinsky and Keller Segel equations. 
Recall that the advective Kuramoto-Sivashinsky equation is given by
\begin{align}\label{e:KSA}
\de_t \phi+Av\cdot \nabla \phi+\frac{1}{2}|\nabla \phi|^2+\Delta^2\phi+\Delta\phi=0\,,
\end{align}
where $A$ is a parameter representing the amplitude of the flow $v$. By rescaling time, it can be equivalently written as
\begin{align}\label{eq:AKSE}
\de_t \phi+v\cdot \nabla \phi+\frac{\nu}{2}|\nabla \phi|^2+\nu\Delta^2\phi+\nu\Delta\phi=0\,,
\end{align}
with $\nu=A^{-1}$. The classical Kuramoto-Sivashinsky equation (by choosing $A=0$ in~\eqref{e:KSA}) models the propagation of the flame front. The analysis of the Kuramoto-Sivashinsky equations in one space dimension is well developed. In one dimension a priori norm estimates on the solution are closed which lead to a good control on the $L^2$ norm of the solution \cite{Bronski2006uncertainty,Collet1993analyticity,Otto2009optimal,
Goldman2015new,Giacomelli2005new,Goluskin2019bounds}. In the dimension greater than one, much less progress has been made since it is a fourth order equation and lack of maximum principle. The global well-posedness in dimesion $d=2$ is only known under some restrictive assumptions, such as for thin domains and for the anisotropically reduced Kuramoto-Sivashinsky equation~\cite{Benachour2014anisotropic,Larios2020well,Sell1991local}, without growing modes~\cite{Ambrose2019global,FengmMazzucato2020global}, or with only one growing mode in each direction~\cite{Ambrose2021global}, for small data.

In~\cite{FengmMazzucato2020global}, one of the authors studied the Kuramoto-Sivashinsky equation with a transport flow, and proved that global existence of the advective Kuramoto-Sivashinsky equation can be achieved for arbitrary data provided
the transport flow $v$ is relaxation enhancing with sufficiently small dissipation time. In this case, the effect generated by the growing modes can be efficiently damped due to enhanced dissipation. In~\cite{ZelatiDolceFengMazzucato2021global}, one of the authors studied the global existence of the advective Kuramoto-Sivashinsky equation with a steady shear. For the case of shear flow, the transport operator has a large kernel space, which means it does not generate any dissipation enhancement on the kernel component. However, if we project the solution to the kernel space of the shear, it reduces to a modified one dimensional Kuramoto-Sivashinsky equation.  For the component orthogonal to the kernel space, by choosing the flow with a large amplitude (or with $\nu$ small), the property of dissipation enhancement will ensure the global existence provided the equation of the coupled component in the kernel space does not have any growing mode, i.e., the size of the projected domain is less than $2\pi$.  

In dimension $d=3$, an added shear flow does not work since the kernel space of the shear flow is two dimensional and the global existence of the modified two dimensional Kuramoto-Sivashinsky equation is not known how to achieve. Instead, we use the planar helical flows as defined in~\eqref{e:helical}.  So that the  component in the kernel space of the flow can be reduced to a modified one dimensional Kuramoto-Sivashinsky equation. For the component orthogonal to the kernel, due to the property of dissipation enhancement  of the flow, we will prove that as long as the domain in the direction orthogonal to the flow does not have any growing mode, i.e., $L_3<2\pi$,  there exists $\nu=\nu(\phi_0)$ small  such that the $L^2$ norm of the component orthogonal to the kernel will stay bounded.  The global well-posedness of the whole solution is thus established. Later we will show similar idea also works for the three dimensional Keller-Segel equation. 

We first state the global existence of the three dimensional advective Kuramoto-Sivashinsky equation as below.

\begin{theorem}\label{t:KS}
Let the domain $\T^3=[0,L_1]\times[0, L_2]\times[0, L_3]$ satisfy $L_3< 2\pi$. Let $\phi_0\in H^1(\mathbb{T}^3)$, and let  $u:[0,L_3)\to \R$ be a smooth function satisfying Assumption~\eqref{a:lowerbddE}. Then  there exists $0<\nu_0<1$ depending on $L_1,\,L_2, L_3, u$ and $\phi_0$ with the following property: for any $0<\nu<\nu_0$, there exists a global-in-time weak solution $\phi$ of \eqref{eq:AKSE} with initial data $\phi_0$ such that $\phi\in L^\infty([0,T),L^2)\cap L^2([0,T),H^2)$ for all $0<T<\infty$.
\end{theorem}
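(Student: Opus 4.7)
The plan is to exploit the kernel structure of the transport operator $v\cdot\nabla$. Since $v(y)$ has vanishing third component and its first two components depend only on $y$, the mean-zero kernel consists of functions of $y$ alone. I would decompose $\phi = \langle\phi\rangle(t,y) + \phi_{\neq}(t,x_1,x_2,y)$, where $\langle\phi\rangle$ is the $(x_1,x_2)$-average. Averaging \eqref{eq:AKSE} and using the identity $\langle|\nabla\phi|^2\rangle = |\partial_y\langle\phi\rangle|^2 + \langle|\nabla\phi_{\neq}|^2\rangle$, one obtains the coupled system
\begin{align*}
& \partial_t\langle\phi\rangle + \nu\partial_y^4\langle\phi\rangle + \nu\partial_y^2\langle\phi\rangle + \tfrac{\nu}{2}|\partial_y\langle\phi\rangle|^2 + \tfrac{\nu}{2}\langle|\nabla\phi_{\neq}|^2\rangle = 0,\\
& \partial_t\phi_{\neq} + v\cdot\nabla\phi_{\neq} + \nu\Delta^2\phi_{\neq} + \nu\Delta\phi_{\neq} + \nu\partial_y\langle\phi\rangle\,\partial_y\phi_{\neq} + \tfrac{\nu}{2}\bigl(|\nabla\phi_{\neq}|^2\bigr)_{\neq} = 0.
\end{align*}
The first equation is a one-dimensional Kuramoto-Sivashinsky equation in $y$ forced by $\phi_{\neq}$; crucially, the hypothesis $L_3 < 2\pi$ makes the symbol $\nu(k_3^4 - k_3^2)$ strictly positive on all nonzero Fourier modes, eliminating the growing-mode obstruction that plagues multidimensional Kuramoto-Sivashinsky theory and yielding unconditional 1D energy estimates on $\langle\phi\rangle$.

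For the orthogonal component $\phi_{\neq}$, the key input is the enhanced dissipation established earlier in the paper for the semigroup $e^{-tH_\nu}$ with $\gamma=2$: on the orthogonal complement of the kernel one has $\|e^{-tH_\nu}g\|_{L^2} \lesssim e^{-\lambda_\nu t}\|g\|_{L^2}$ with $\lambda_\nu \sim \nu^{1/3}$, so $\nu/\lambda_\nu \sim \nu^{2/3} \to 0$. This smallness is what lets me dominate the three destabilizing contributions in the $\phi_{\neq}$-equation, each carrying a factor of $\nu$: the backward diffusion $\nu\Delta\phi_{\neq}$, the coupling $\nu\partial_y\langle\phi\rangle\,\partial_y\phi_{\neq}$, and the quadratic self-interaction $\tfrac{\nu}{2}(|\nabla\phi_{\neq}|^2)_{\neq}$. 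My strategy would be a continuity/bootstrap argument: assume $\|\phi_{\neq}(t)\|_{L^2}$ stays below some candidate bound $M$ on a maximal interval $[0,T^*)$, apply Duhamel's formula with the enhanced-dissipation semigroup to estimate $\|\phi_{\neq}\|_{L^2}$ by the initial data plus forcing, close the forcing with Gagliardo-Nirenberg together with the 1D control on $\langle\phi\rangle$, and recover the strictly improved bound $M/2$ once $\nu < \nu_0(L_1,L_2,L_3,u,\phi_0)$ is small enough; this extends $T^*$ to infinity.

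The main obstacle is controlling the quadratic nonlinearity in three dimensions, where Sobolev critical exponents are tighter than in the two-dimensional shear setting of \cite{ZelatiDolceFengMazzucato2021global}: $|\nabla\phi_{\neq}|^2$ cannot be controlled directly in $L^2$ and requires tracking a higher norm, say an $L^2_tH^2$-type norm fed by the parabolic smoothing from $\nu\Delta^2\phi_{\neq}$, then interpolating so that each factor of $\|\nabla\phi_{\neq}\|$ splits between the fourth-order dissipation and the enhanced decay of $\|\phi_{\neq}\|_{L^2}$. The coupling term likewise demands a uniform-in-time $L^\infty$ bound on $\partial_y\langle\phi\rangle$, which is extracted from the 1D equation for $\langle\phi\rangle$ by standard energy estimates and Sobolev embedding in one variable, using that $\tfrac{\nu}{2}\langle|\nabla\phi_{\neq}|^2\rangle$ is the only forcing and is small by the bootstrap assumption. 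Once these a priori bounds are in hand, I would construct a Galerkin approximation, obtain the same estimates uniformly in the truncation parameter, and pass to the limit via weak-$*$ and Aubin-Lions compactness to produce a global weak solution in $L^\infty_tL^2 \cap L^2_tH^2$.
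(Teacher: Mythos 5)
Your proposal follows essentially the same route as the paper: the same decomposition $\phi=\langle\phi\rangle+\phi_{\neq}$, the same identification of $L_3<2\pi$ as the condition making the fourth-order operator coercive on the one-dimensional kernel component (the paper implements this via the Poincar\'e inequality for $\psi=\partial_y\langle\phi\rangle$ in Lemma~\ref{l:linftyl2}), and the same Duhamel-plus-bootstrap scheme in which $\nu/\lambda_\nu\to 0$ absorbs the backward diffusion, the coupling term, and the quadratic nonlinearity after Gagliardo--Nirenberg interpolation against the $L^2_tH^2$ dissipation budget. The only minor deviations are cosmetic: the paper controls $\psi$ in $L^\infty_tL^2_y\cap L^2_t\dot H^2_y$ rather than in $L^\infty$, and it invokes the mild-solution framework of \cite{FengmMazzucato2020global} instead of a Galerkin construction.
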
 

We make a few remarks in below.
\begin{remark}\textrm{\\}
\begin{itemize}
\item[(1)]The proof of Theorem~\eqref{t:KS} is based on a bootstrap argument, which is similar to the previous work of one of the authors~\cite{ZelatiDolceFengMazzucato2021global}. We will find later the choice of $\nu_0$ based on the estimate of $\lambda_\nu$. Hence achieving better estimate of the dissipation rate $\lambda_\nu$ will improve the threshold on global existence.
\item[(2)] When the dimension is $d\geq 4$ in general, one may also use similar idea to add a transport flow which is relaxation enhancing outside of a one dimensional kernel space. In higher dimensions the detailed estimates may get complicated and some embedding properties may fail. Besides that, we point out one more difficulty lies in how to construct such flows. The construction of the planar helical flow given in~\eqref{e:helical} can not be immediately generalized to higher dimensions. 
\end{itemize}

\end{remark}
We treat the three dimensional Keller-Segel equation with similar idea. By adding the advective flow with some amplitude and then rescaling time, finally the equation becomes
\begin{equation}\label{e:KSegel}
\begin{cases}
\partial_t\rho+v\cdot \nabla \rho-\nu\Delta\rho+\nu\nabla\cdot(\rho \nabla c)=0\,,\\
-\Delta c=\rho-\overline{\rho}\,,  \\
\rho(0,x_1,x_2,y)=\rho_0(x_1,x_2,y)\,.
\end{cases}
\end{equation}

As we know, the solution to the classical Keller-Segel system may  blow up in finite time when the dimension is  larger than one. More precisely, in the case the dimension $d=2$, if $L^1$ norm of initial data $\rho_0$ is less than $8\pi$, then there exists a unique global solution, and if $L^1$ norm of initial data $\rho_0$ exceeds $8\pi$, the solution blows up in finite time. For dimension $d\geq 3$, the blow-up occurs for solutions with arbitrary small  $L^1$ norm of the initial data. For more details, one can read~ \cite{Masmoudi.2008,Blanchet.2006,Corrias.2004,Luckhaus.1992,Nagai.1995,Winkler.2019}. An interesting question araises whether one can suppress the finite time blow-up by the stabilizing effect of the moving fluid.  Kiselev and Xu \cite{Kiselev.2016} considered the relaxation enhancing flow which was introduced in \cite{Constantin.2008}. They proved the solution of the advective Keller-Segel equation does not blow-up in finite time provided the amplitude of the relaxation enhancing flow is large enough. Later for the generalized  Keller-Segel equation with fractional Laplacian and relaxation enhancing flow, the global ewell-posedness are disscussed in~\cite{Hopf.2018,Shi.2019,Shi.2021}. Bedrossian and He~\cite{Bedrossian.2017} proved  that shear flows can also suppress the blow-up of the solution.  More precisely, they proved that in the two dimensional case the solution is global in time. While in the three dimensional case, the global well-posedness is guaranteed only when the initial mass is less than $8\pi$. He and Tadmor \cite{Tadmor.2019} investigated the effect of a flow of the ambient environment introduced by harmonic potentials in the two dimensional case. They showed that the enhanced ambient flow doubles the amount of allowable mass which evolve to the global smooth solutions. For the high dimensional case,  similar questions have been studied by He, Tadmor and Zlato\v{s} \cite{Tadmor.2021} recently. Zeng, Zhang and Zi \cite{Zeng.2021} considered the two dimensional Keller-Segel-Navier-Stokes system near the Coutte flow. They showed that if flow is large enough,  the solution to the equation was global existence. For the parabolic-parabolic case, some results can be referred to \cite{He.2018,Zeng.2021}.

In this paper, we will prove the global existence of the solution of the three dimensional Keller-Segel equation with an advective planar helical flow. This question is motivated by works of Wang et. al. \cite{Chen.2016}  and Bedrossian et. al \cite{Bedrossian.2017}.  Similiar to the three dimensional Kuramoto-Sivashinsky equation, the kernel component of the flow satisfies a modified one dimension Keller-Segel equation and the othogoanl component will be efficiently effected by the flow.  We will prove that by choosing the flow with large amplitude (or equivalently, with $\nu$ small in~\eqref{e:KSegel}), the $L^2$ norm of the solution is bounded uniformly in time.  Actually, since the $L^2$ estimate is supercritical for the three dimensional Keller-Segel equation (see~\cite{Kiselev.2016}), the global classical solution can be achieved.  To point out, the shear flow is used as the transport flow and the global existence of the advective Keller-Segel equation in dimension $d=3$ is achieved with the restriction of mass less than $8\pi$ in~\cite{Bedrossian.2017}.  In this paper, we prove the global existence with any initial data, instead with the $8\pi$ restriction.

\begin{theorem}\label{t:KSegel}
Let $\rho_0\in H^1(\T^3)\cap L^\infty(\mathbb{T}^3)$, and let  $u:[0,L_3)\to \R$ be a smooth function satisfying Assumption~\eqref{a:lowerbddE}. Then  there exists $0<\nu_0<1$ depending on $L_1,\,L_2, L_3, u$ and $\rho_0$ with the following property: for any $0<\nu<\nu_0$, there exists a global-in-time classical solution $\rho$ of \eqref{e:KSegel} with initial data $\rho_0$.
\end{theorem}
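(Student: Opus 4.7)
The plan is to decompose $\rho = \rz(t,y) + \rn(t,x_1,x_2,y)$ along the kernel of the transport operator $v\cdot\nabla$ (functions of $y$ alone) and its orthogonal complement, and then run a bootstrap argument that exploits the enhanced dissipation on $\rn$ provided by the planar helical flow. Averaging \eqref{e:KSegel} in $(x_1,x_2)$ and using that $v\cdot\nabla\rz=0$, one gets the coupled system
\begin{align*}
\pa_t\rz - \nu\pa_y^2\rz + \nu\pa_y\bigl(\rz\,\pa_y\cz\bigr) + \nu\pa_y\langle\rn\,\pa_y\cn\rangle &= 0,\\
\pa_t\rn + v\cdot\nabla\rn - \nu\Delta\rn + \nu P_{\neq}\nabla\cdot(\rho\nabla c) &= 0,
\end{align*}
supplemented by $-\pa_y^2\cz=\rz-\overline{\rho}$ and $-\Delta\cn=\rn$. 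In particular, $\rz$ satisfies a one-dimensional Keller-Segel-type equation forced by an $(x_1,x_2)$-average that is quadratic in $\rn$.

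The bootstrap fixes $T^*>0$ and assumes $\|\rn(t)\|_{L^2}^2\le \delta^2$ and $\|\rz(t)\|_{H^1}\le M$ on $[0,T^*]$, with $\delta,M$ depending on $\rho_0$ and the domain. For $\rz$, I test against $\rz$, integrate by parts, and use Agmon's 1D inequality $\|f\|_{L^\infty}^2\le C\|f\|_{L^2}\|\pa_y f\|_{L^2}$ together with mass conservation to control the cubic Keller-Segel contribution $\tfrac{\nu}{2}\int\rz^2(\rz-\overline{\rho})\,dy$; the coupling term $\nu\int\pa_y\rz\cdot\langle\rn\,\pa_y\cn\rangle\,dy$ is bounded by Cauchy-Schwarz, elliptic regularity for $\cn$ on $\T^3$, and the Sobolev embedding $H^1(\T^3)\hookrightarrow L^6(\T^3)$, yielding a contribution of order $\delta^2$ that can be absorbed. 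Higher-order $H^1$ estimates on $\rz$ follow similarly by testing with $-\pa_y^2\rz$, and give a uniform bound for $\rz$ provided $\delta$ and $M$ are chosen consistently.

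For $\rn$ I use the Duhamel representation
\begin{equation*}
\rn(t) = e^{-tH_\nu}\rn(0) - \nu\int_0^t e^{-(t-s)H_\nu}P_{\neq}\nabla\cdot(\rho\nabla c)(s)\,ds,
\end{equation*}
where $H_\nu$ is as in \eqref{e:Hnu} with $\gamma=1$. The dissipation enhancement established earlier in the paper gives $\|e^{-tH_\nu}f\|_{L^2}\le Ce^{-\lambda_\nu t}\|f\|_{L^2}$ for $f$ orthogonal to $\ker(v\cdot\nabla)$, with $\nu/\lambda_\nu\to 0$ as $\nu\to 0$. Splitting $\nabla\cdot(\rho\nabla c)$ according to the $\rz/\rn$ and $\cz/\cn$ decomposition, the pieces linear in $\rn$ are controlled by the bootstrap bound $M$ on $\rz$, and the dangerous quadratic piece $\rn\nabla\cn$ is absorbed using the gain $\nu/\lambda_\nu$ from the enhanced dissipation together with Sobolev embedding on $\T^3$. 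Taking $\nu$ small enough, depending on $L_1,L_2,L_3,u,\rho_0$, improves the bootstrap constant from $\delta^2$ to $\delta^2/4$, and a standard continuity argument extends $T^*$ to $\infty$.

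The main obstacle will be arranging the $\rz$-estimate so that the constants scale properly in $\nu$ and $\delta$: the 1D Keller-Segel nonlinearity is critical, and although the 1D problem is globally well-posed, tracking the dependence of the constants on $\nu$ and extracting the right powers from the coupling with $\rn$ requires care, since an adverse scaling would prevent the bootstrap from closing. Once a uniform-in-time $L^2$ bound on the full solution $\rho$ has been obtained, parabolic regularity theory for Keller-Segel (as in \cite{Kiselev.2016}, using that $L^2$ is supercritical in three dimensions) upgrades the weak solution to a global classical one.
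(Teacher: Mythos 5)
Your strategy is essentially the paper's: the same $\rz/\rn$ decomposition, a bootstrap coupling a one-dimensional Keller--Segel equation for the zero mode (closed via mass conservation and 1D Gagliardo--Nirenberg/Agmon interpolation) with a Duhamel-plus-enhanced-dissipation estimate for $\rn$, smallness extracted from $\nu/\lambda_\nu\to 0$, and a final upgrade to a classical solution via the supercriticality of the $L^2$ norm in three dimensions (Moser--Alikakos iteration and the regularity criterion of \cite{Kiselev.2016}).

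There is, however, one concrete gap in the bootstrap scheme as you state it. You propose to propagate only the uniform bounds $\norm{\rn(t)}_{L^2}^2\le\delta^2$ and $\norm{\rz}_{H^1}\le M$ on $[0,T^*]$. But the forcing of the zero-mode equation by $\nu\langle\nabla\cdot(\rn\nabla\cn)\rangle$ is controlled, after testing and Young's inequality, by time integrals of the form $\nu\int_0^t\norm{\nabla\rn}_{L^2}^2\,d\tau$ and $\nu\int_0^t\norm{\rn}_{L^2}^{10}\,d\tau$; under a merely uniform-in-time bound on $\norm{\rn}_{L^2}$ these grow linearly in $t$, and the nonlinear damping $-\nu\norm{\rz-\overline{\rho}}_{L^2_y}^6/(CM^4)$ coming from the 1D interpolation cannot compensate a forcing whose time integral is unbounded, so the uniform-in-time bound on $\rz$ (and hence $M$) does not close. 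This is why the paper's bootstrap hypotheses (A-1)--(A-2) are \emph{decaying} bounds, $\norm{\rn(t)}_{L^2}^2\lesssim e^{-\lambda_\nu t/4}\norm{\rho_0}_{L^2}^2$ and $\nu\int_s^t\norm{\nabla\rn}_{L^2}^2\,d\tau\lesssim e^{-\lambda_\nu s/4}\norm{\rho_0}_{L^2}^2$, which make these time integrals uniformly bounded; your Duhamel step does produce such decay, so the fix is simply to promote the decay to a bootstrap hypothesis rather than the static bound $\delta^2$. A secondary, organizational point: the paper also carries an $L^\infty$ bound on $\rho$ inside the bootstrap (its assumption (A-5)), since terms like $\norm{\rz}_{L^\infty_y}\norm{\rn}_{L^2}^2$ and $\norm{\rn}_{L^2}\norm{\rn}_{L^\infty}$ appear in the energy identities; deferring all $L^\infty$ control to a final regularity step, as you do, requires replacing those occurrences by interpolation against $\norm{\nabla\rn}_{L^2}$, which is possible but should be done explicitly.
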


Throughout the paper, we use standard notations to denote function spaces and use $C$ to denote a generic constant which may vary from line to line.

The paper is organized as follows. In Section~\ref{s:helical}, we give the estimate on the rate of dissipation enhancement of the introduced planar helical flows and give two examples of such flows. In Section~\ref{s:KS}, we prove Theorem~\ref{t:KS} to show the global existence of the three dimensional  Kuramoto-Sivashinsky equation. In Section~\ref{s:KSegel}, we prove Theorem~\ref{t:KSegel} to establish the global well-posedness of the three dimensional  Keller-Segel equation.

\section{Dissipation enhancement of the planar helical flow}\label{s:helical}
Let $(X,\norm{\cdot})$ be a complex Hilbert space and let $H$ be a closed, densely defined operator on $X$. $H$ is m-accretive if the left open half-plane is contained in the resolvent set with
\begin{align*}
(H+\lambda)^{-1}\in \mathcal{B}(X)\,,\quad \norm{(H+\lambda)^{-1}}\leq (\Re \lambda)^{-1}\,,\quad \text{for } \Re\lambda >0\,,
\end{align*}
where $\mathcal{B}(X)$ denotes the set of bounded linear operators on $X$. As shown in~\cite{Wei2019diffusion},  using a Gearhart-Pr\"uss type theorem with a sharp bound for m-accretive operators, the decay property of the semigroup $e^{-tH}$ can be bounded by 
\begin{align}\label{e:maccretive}
\norm{e^{-tH}} \leq e^{-t\Psi(H)+\pi/2}\,,\quad\forall t\geq 0\,,
\end{align}
where $\Psi(H)$ is defined by 
\begin{align}
\Psi(H)=\inf\set{\norm{(H-i\lambda)g}:\, g\in D(H), \,\lambda\in \R,\, \norm{g}=1}\,.
\end{align}

Let $k=(k_1, k_2)\neq 0$ with $\Big(\frac{L_1k_1}{2\pi}, \frac{L_2k_2}{2\pi}\Big)\in \Z^2$. We consider the operator localized to the $k$th Fourier mode:
\begin{align}
H_{\nu, k}:=\nu(-\Delta_k)^\gamma+ik_1u(y) \sin (2\pi y/L_3)  +ik_2 u(y)\cos(2\pi y/L_3) \,,\quad \Delta_k=-k_1^2-k_2^2+\de_{yy}\,,
\end{align}
with $\gamma=1$ or $2$. Similar to the argument in~\cite{Wei2019diffusion}, it can be verified that $H_{\nu, k}$ is m-accretive on $L^2(\T^1)$ with domain $H^{2\gamma}(\T^1)$. As a consequence, the estimate~\eqref{e:maccretive} can be applied and we only need to find a lower bound of $\Psi(H_{\nu,k})$.

To establish a lower bound of $\Psi(H_{\nu,k})$, we give the following assumption on $u$, which is inspired by  the previous work of one of the authors in~\cite{ZelatiDolceFengMazzucato2021global}.
\begin{assumption}\label{a:lowerbddE}
There exist $m, N\in \N$, $c_1>0$ and  $\delta_0\in(0,L_3)$ with the property that, for
  any $\lambda , \alpha \in \R$ and any $\delta\in(0,\delta_0)$, there exist $n\leq N$ and points $y_1,\ldots y_n\in [0,L_3)$
such that
\begin{align}\label{eq:lowerbddE2}
|u(y)\sin(2\pi y/L_3+\alpha)-\lambda|\geq c_1 \left(\frac{\delta}{L_3}\right)^m, \qquad \forall \  |y-y_j|\geq \delta, \quad \forall j\in \{1,\ldots n\}.
\end{align}
\end{assumption}
Based on such an assumption on $u$, some calculations are carried out and the lower bounds of $\Psi(H_{\nu,k})$ are achieved in the next Proposition.

\begin{proposition}\label{p:derate}
Let $u$ satisfy Assumption~\eqref{a:lowerbddE}. Let $k\neq(0, 0)$ and $\nu|k|^{-1}\leq 1$. There exists a constant $\epsilon_0>0$, independent of $\nu$ and $k$, such that
\begin{itemize}
\item[(i)] In the case $\gamma=1$, we have $\Psi(H_{\nu, k})\geq \epsilon_0 \nu^{\frac{m}{m+2}}|k|^{\frac{2}{m+2}}$.
\item[(ii)] In the case $\gamma=2$, we have $\Psi(H_{\nu, k})\geq \epsilon_0 \nu^{\frac{m}{m+4}}|k|^{\frac{4}{m+4}}$.
\end{itemize}
\end{proposition}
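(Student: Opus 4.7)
The plan is to apply the Gearhart-Pr\"{u}ss bound~\eqref{e:maccretive} to reduce the desired semigroup decay to a lower bound on $\Psi(H_{\nu,k})$. After combining the two trigonometric factors, $k_1\sin(2\pi y/L_3)+k_2\cos(2\pi y/L_3)=|k|\sin(2\pi y/L_3+\alpha_k)$, the operator takes the form $H_{\nu,k}-i\lambda=\nu(-\Delta_k)^\gamma+iM(y)$ with $M(y):=|k|\,u(y)\sin(2\pi y/L_3+\alpha_k)-\lambda$. Assumption~\ref{a:lowerbddE} (applied with $\alpha=\alpha_k$ and $\lambda/|k|$ in place of $\lambda$) then provides, for each $\delta\in(0,\delta_0)$, a splitting $[0,L_3)=B_\delta\cup E_\delta$ with $|B_\delta|\leq 2N\delta$ and a pointwise lower bound $|M(y)|\geq c_1|k|(\delta/L_3)^m$ on $E_\delta$.

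Fix now $g\in D(H_{\nu,k})$ with $\|g\|_{L^2(\T)}=1$ and set $f:=(H_{\nu,k}-i\lambda)g$, $\epsilon:=\|f\|_{L^2}$. Because $iM$ is skew-adjoint, the real part of $\langle f,g\rangle$ gives $\nu\|(-\Delta_k)^{\gamma/2}g\|_{L^2}^2\leq\epsilon$, whence both $\nu|k|^{2\gamma}\leq\epsilon$ and $\nu\|\partial_y^\gamma g\|_{L^2}^2\leq\epsilon$; a one-dimensional Gagliardo-Nirenberg inequality then converts the latter into the $L^\infty$ bound $\|g\|_{L^\infty}^2\lesssim 1+(\epsilon/\nu)^{1/(2\gamma)}$. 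To control the multiplication by $M$, test the identity $f=iMg+\nu(-\Delta_k)^\gamma g$ against $Mg$ and take imaginary part, obtaining $\|Mg\|_{L^2}^2\leq\epsilon\|Mg\|_{L^2}+\nu|\Im\langle(-\Delta_k)^\gamma g,Mg\rangle|$; integrating by parts so that every derivative is moved off $g$ onto $M$ (each such move costing only a factor $|k|$, since $|M^{(j)}|\lesssim|k|$ for $1\leq j\leq 2\gamma$) and then invoking the $H^\gamma$ control above produces, after Young's inequality, a bound of the schematic form $\|Mg\|_{L^2}^2\lesssim\epsilon^2+|k|^\gamma(\nu\epsilon)^{1/2}$.

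The two pieces of the decomposition $1=\|g\|_{L^2(B_\delta)}^2+\|g\|_{L^2(E_\delta)}^2$ are then estimated by $|B_\delta|\|g\|_{L^\infty}^2\lesssim\delta(1+(\epsilon/\nu)^{1/(2\gamma)})$ and by $\|Mg\|_{L^2}^2/(c_1^2|k|^2(\delta/L_3)^{2m})$ respectively, yielding
\[
1\;\lesssim\;\delta\bigl(1+(\epsilon/\nu)^{1/(2\gamma)}\bigr)\;+\;\frac{\epsilon^2+|k|^\gamma(\nu\epsilon)^{1/2}}{|k|^2\delta^{2m}}.
\]
In the target regime $\nu|k|^{2\gamma}\leq\epsilon\leq 1$, the constraint from the real part allows comparison of the two terms in the good-set numerator; equating the dominant bad-set contribution $\delta(\epsilon/\nu)^{1/(2\gamma)}$ with the dominant good-set contribution $\epsilon^2/(|k|^2\delta^{2m})$ gives the optimal choice $\delta\sim(\nu/\epsilon)^{1/(2\gamma)}$ and, after elementary algebra, $\epsilon^{m+2\gamma}\gtrsim\nu^m|k|^{2\gamma}$---precisely the lower bound asserted in (i) for $\gamma=1$ and in (ii) for $\gamma=2$.

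The delicate part of the argument will be the cross-term estimate in the bound for $\|Mg\|_{L^2}^2$: every derivative of $g$ must be paired with a derivative of $M$, exploiting the linear-in-$|k|$ scaling of $|M^{(j)}|$, and the resulting cross-term must be checked to be consistent with the final choice $\delta\sim(\nu/\epsilon)^{1/(2\gamma)}$. With a cruder bound at this step, the subsequent optimization would produce only a weaker exponent; it is the tight pairing between the $\dot H^\gamma$ control coming from the real part and the multiplier $M$ that yields the sharp rate $m/(m+2\gamma)$ in $\nu$ and $2\gamma/(m+2\gamma)$ in $|k|$.
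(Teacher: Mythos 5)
Your overall architecture coincides with the paper's: reduce via the Gearhart--Pr\"uss bound \eqref{e:maccretive} to a lower bound on $\Psi(H_{\nu,k})$, write the skew part as $iM(y)$ with $M(y)=|k|u(y)\sin(2\pi y/L_3+\alpha_k)-\lambda$, use the real-part identity to control $\nu\|\partial_y^\gamma g\|^2$ and hence $\|g\|_{L^\infty}$, bound the bad set by its measure $O(N\delta)$ times $\|g\|_{L^\infty}^2$, and use Assumption~\ref{a:lowerbddE} on the good set $E$. The real-part and bad-set estimates are exactly the paper's.

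The gap is in your good-set estimate, and it is quantitative rather than cosmetic. Testing against $Mg$ gives $\|Mg\|^2\lesssim\epsilon^2+|k|^\gamma(\nu\epsilon)^{1/2}$ (for $\gamma=1$ the cross term is $\nu|\Im\langle\partial_y g,M'g\rangle|\le C\nu|k|\|\partial_yg\|\,\|g\|\le C|k|(\nu\epsilon)^{1/2}$, and Cauchy--Schwarz gives nothing better), and you then divide by $\bigl(c_1|k|(\delta/L_3)^m\bigr)^2$. The coercivity therefore enters \emph{squared}, putting $\delta^{2m}$ in the denominator, while the cross term gains no compensating factor. Concretely, for $\gamma=1$ the resulting contribution is of size $(\nu\epsilon)^{1/2}/\bigl(|k|\delta^{2m}\bigr)$ (up to $L_3$-constants); at your claimed threshold $\epsilon\sim\nu^{m/(m+2)}|k|^{2/(m+2)}$ with $\delta\sim(\nu/|k|)^{1/(m+2)}$ this equals $(|k|/\nu)^{(m-1)/(m+2)}$, which blows up as $\nu\to0$ for every $m\ge2$ --- in particular for $m=2$, the value realized by both examples in the paper. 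Optimizing your inequality honestly (in the relevant regime the term $(\nu\epsilon)^{1/2}/(|k|\delta^{2m})$ dominates $\epsilon^2/(|k|^2\delta^{2m})$, so one must balance it against $\delta(\epsilon/\nu)^{1/2}$) yields only $\epsilon\gtrsim\nu^{(2m-1)/(2m+1)}|k|^{2/(2m+1)}$, strictly weaker than the stated rate for $m\ge2$; the two exponents agree only when $m=1$. The paper circumvents this by using the \emph{bounded} multiplier $\chi$, a smooth approximation of $\sign\bigl(u(y)\sin(2\pi y/L_3+\alpha_k)-\lambda/|k|\bigr)$ with $\|\chi'\|_{L^\infty}\le c_2\delta^{-1}$ (and $\|\chi''\|_{L^\infty}\le c_2\delta^{-2}$ when $\gamma=2$): the good-set gain is then $\int_E|M||g|^2\ge c_1|k|(\delta/L_3)^m\int_E|g|^2$, linear rather than quadratic in the small lower bound, while the cross terms cost only powers of $\delta^{-1}$, and all contributions balance at $\delta\sim L_3(\nu/|k|)^{1/(m+2\gamma)}$ to give the sharp exponents. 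Replacing your test function $Mg$ by $\chi g$ repairs the argument; as written, it does not prove the proposition.
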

Before proving the proposition, we state a direct corollary as follows.
\begin{corollary}\label{c:decay}
In the hypotheses of Proposition~\eqref{p:derate}. Let $P_k$ denote the $ L^2$ projection onto the $k$th Fourier mode. Then for every $t\geq 0$, it holds that
\begin{align}\label{e:derate}
\norm{e^{-tH_{\nu}}P_k}\leq e^{-\lambda_\nu t+\pi/2}\,,
\end{align}
where $\lambda_\nu=\epsilon_0 \nu^{\frac{m}{m+2}}|k|^{\frac{2}{m+2}}$ for $\gamma=1$ and $\lambda_\nu=\epsilon_0 \nu^{\frac{m}{m+4}}|k|^{\frac{4}{m+4}}$ for $\gamma=2$\,.
\end{corollary}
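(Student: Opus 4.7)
The plan is to reduce the full three-dimensional semigroup estimate to a mode-by-mode estimate via Fourier decomposition in the transverse variables $(x_1,x_2)$, and then combine the abstract Gearhart--Pr\"uss bound~\eqref{e:maccretive} with the quantitative spectral lower bound from Proposition~\ref{p:derate}.

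First I would observe that the operator $H_\nu$ defined in~\eqref{e:Hnu} has coefficients depending only on $y$, so it commutes with translations in $x_1$ and $x_2$, and hence with each Fourier projection $P_k$. Writing any $f\in L^2(\T^3)$ as $f=\sum_k e^{i(k_1 x_1 + k_2 x_2)}\widehat f(k,y)$, we have $(P_k f)(x_1,x_2,y) = e^{i(k_1 x_1+k_2 x_2)}\widehat f(k,y)$, and the action of $H_\nu$ on this mode is exactly multiplication by the one-dimensional operator $H_{\nu,k}$ acting in the $y$-variable. Consequently $e^{-tH_\nu}P_k$ reduces, through the unitary isomorphism between the $k$th Fourier subspace and $L^2(\T^1)$, to $e^{-tH_{\nu,k}}$, so that
\begin{equation*}
\norm{e^{-tH_\nu}P_k}_{L^2(\T^3)\to L^2(\T^3)} \;=\; \norm{e^{-tH_{\nu,k}}}_{L^2(\T^1)\to L^2(\T^1)}.
\end{equation*}

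Next I would invoke the m-accretivity of $H_{\nu,k}$ on $L^2(\T^1)$ with domain $H^{2\gamma}(\T^1)$, which is already noted in the text immediately before Proposition~\ref{p:derate}: the skew-adjoint transport term $ik_1 u(y)\sin(2\pi y/L_3) + ik_2 u(y)\cos(2\pi y/L_3)$ is purely imaginary multiplication, and the self-adjoint dissipative term $\nu(-\Delta_k)^\gamma$ is nonnegative, so the real part of $(H_{\nu,k}g,g)$ is nonnegative and the standard resolvent bound holds. This places us in the framework of~\eqref{e:maccretive}, which gives
\begin{equation*}
\norm{e^{-tH_{\nu,k}}} \;\leq\; e^{-t\Psi(H_{\nu,k}) + \pi/2}, \qquad t\geq 0.
\end{equation*}

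Finally I would plug in the lower bound for $\Psi(H_{\nu,k})$ supplied by Proposition~\ref{p:derate}: in the case $\gamma=1$ this is $\Psi(H_{\nu,k})\geq \epsilon_0 \nu^{m/(m+2)} |k|^{2/(m+2)}$, and in the case $\gamma=2$ it is $\Psi(H_{\nu,k})\geq \epsilon_0 \nu^{m/(m+4)} |k|^{4/(m+4)}$. Setting $\lambda_\nu$ to be the corresponding right-hand side and combining the previous two displayed estimates yields~\eqref{e:derate}. Since the corollary is essentially a packaging step, there is no real obstacle; the only point worth being careful about is the verification that the reduction to one dimension is an isometry on the relevant Fourier subspace so that the operator norm on $L^2(\T^3)$ matches the one on $L^2(\T^1)$, and that the assumption $\nu|k|^{-1}\leq 1$ needed for Proposition~\ref{p:derate} is consistent with the range of $k$ and $\nu$ we ultimately apply the bound on.
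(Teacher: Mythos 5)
Your proposal is correct and follows essentially the same route the paper intends: the corollary is stated as a direct consequence of combining the Gearhart--Pr\"uss bound~\eqref{e:maccretive} for the m-accretive operator $H_{\nu,k}$ with the lower bound on $\Psi(H_{\nu,k})$ from Proposition~\ref{p:derate}, after reducing $e^{-tH_\nu}P_k$ to the one-dimensional semigroup $e^{-tH_{\nu,k}}$ via the Fourier decomposition in $(x_1,x_2)$. Your added care about the unitarity of that reduction and the hypothesis $\nu|k|^{-1}\leq 1$ is appropriate but does not change the argument.
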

\begin{proof}[Proof of Proposition~\eqref{p:derate}]
We first prove the case when $\gamma=1$. For notational conciseness, we denote the $L^2$ norm as $\norm{\cdot}$ and the Hermitian inner product in $L^2$ as $\lan \cdot, \cdot\ran$.  For any fixed $\lambda\in \R$ and $g\in D(H_{\nu,k})$ with $\norm{g}=1$, we denote
\begin{align*}
H&:=-\nu\Delta_k+i k_1 u(y)\sin(2\pi y/L_3) +i k_2 u(y)\cos(2\pi y/L_3)-i\lambda\\
&=-\nu \Delta_k+ i |k| \big( u(y)\sin(2\pi y/L_3+\alpha_k)-\tilde \lambda\big)\,, \quad \tilde \lambda := \frac{\lambda}{|k|}\,.
\end{align*}
Denote
\begin{align}
E:=\{y\in [0, L_3): |y-y_j|\geq \delta, \quad \forall j\in\{1,\ldots, n\} \}\,,
\end{align}
where $\{y_j\}_{1\leq j\leq n}$ are the points in Assumption~\eqref{a:lowerbddE}.
Let $\chi:[0, L_3)\to [-1, 1]$ be a smooth approximation of $\sign (u(y)\sin (2\pi y/L_3+\alpha_k)-\tilde\lambda)$ satisfying $\norm{\chi'}_{L^\infty} \leq c_2 \delta^{-1}$, $\norm{\chi''}_{L^\infty}\leq c_2\delta^{-2}$, $\chi(y)(u(y)\sin (2\pi y/L_3+\alpha_k)-\tilde\lambda)\geq 0$ and
\begin{align}
\chi(y)(u(y)\sin (2\pi y/L_3+\alpha_k)-\tilde\lambda)=|u(y)\sin (2\pi y/L_3+\alpha_k)-\tilde\lambda|\,,\quad \forall y\in E\,.
\end{align}
Such a function $\chi$ can be constructed via a standard mollification argument. We observe that
\begin{align}
\lan Hg,\chi g\ran &=-\nu \lan \Delta_k g, \chi g\ran+i|k|\lan \big(u(y)\sin (2\pi y/L_3+\alpha_k)-\tilde \lambda\big)g, \chi g\ran\\
&=\nu\lan\de_y g, \chi' g\ran +\nu\lan \de_y g, \chi \de_y g\ran+\nu|k|^2\lan g, \chi g\ran+i|k|\lan  \big(u(y)\sin (2\pi y/L_3+\alpha_k)-\tilde \lambda\big)g, \chi g\ran\,,
\end{align}
which implies
\begin{align}
\Im \lan Hg, \chi g\ran=\nu\Im \lan \de_y g, \chi' g\ran+  |k|\lan  \big(u(y)\sin (2 \pi y/L_3+\alpha_k)-\tilde \lambda\big)g, \chi g\ran\,.
\end{align}
Using the properties of $\chi$, we get
\begin{align}\label{e:lowerbddelta1}
|k|\lan  \big(u(y)\sin (2\pi y/L_3+\alpha_k)-\tilde \lambda\big)g, \chi g\ran \leq \norm{Hg}\norm{ g}+\frac{c_2\nu}{\delta}\norm{\de_y g}\norm{g}\,.
\end{align}
By~\eqref{eq:lowerbddE2}, we have
\begin{align}\label{e:lowerbddelta2}
\lan  \big(u(y)\sin (2\pi y/L_3+\alpha_k)-\tilde \lambda\big)g, \chi g\ran \geq \int_E |u(y)\sin(2 \pi y/L_3+\alpha_k)-\tilde \lambda||g(y)|^2\,dy\geq c_1\Big(\frac{\delta}{L_3}\Big)^m \int_E |g(y)|^2\,dy\,.
\end{align}
We note that $\norm{\de_y g}^2\leq 1/\nu \norm{Hg}\norm{g}$ since $\Re\lan Hg, g\ran=\nu \norm{\de_y g}^2+|k|^2\norm{g}^2$. Using this in~\eqref{e:lowerbddelta1} and ultilizing~\eqref{e:lowerbddelta2}, one has
\begin{align}\label{eq:estimateE1}
\nonumber
\int_E|g(y)|^2\,dy&\leq \frac{1}{c_1|k|}\Big(\frac{L_3}{\delta}\Big)^m\Big(\norm{Hg}\norm{g}+\frac{c_2\nu^{1/2}}{\delta}\norm{Hg}^{1/2}\norm{g}^{3/2}\Big)\\
&\leq \Big(\frac{1}{c_1|k|}\Big(\frac{L_3}{\delta}\Big)^m+\frac{\tilde c_2 \nu}{L_3^2|k|^2}\Big(\frac{L_3}{\delta}\Big)^{2m+2}\Big)\norm{Hg}\norm{g}+\frac{1}{4}\norm{g}^2\,.
\end{align}
On the other hand, since $E^c$ is of size less than $N\delta$, it holds that
\begin{align}\label{eq:estimateEc1}
\int_{E^c} |g(y)|^2\dd y\leq N\delta \|g\|^2_{L^\infty}&\leq CN\delta \left(\|g\|\|\de_yg\| +\| g\|^2\right)\notag\\
&\leq CN\delta \left(\frac{1}{\nu^{1/2}}\|g\|^{3/2}\| Hg\|^{1/2} + \| g\|^2\right)\notag\\
&\leq \frac{C ( N\delta)^2}{\nu}\| H g\|\norm{g} +\frac{1}{2}\| g\|^2\,.
\end{align}
where  $\delta_0$ is taken small enough so that $\delta\leq 1/(4CN)$ for any $\delta\in (0, \delta_0)$. Adding up~\eqref{eq:estimateE1} and~\eqref{eq:estimateEc1} yields that
\begin{align}
\norm{g}^2\leq 4\Big(\frac{1}{c_1|k|}\Big(\frac{L_3}{\delta}\Big)^m+\frac{\tilde c_2 \nu}{L_3^2|k|^2}\Big(\frac{L_3}{\delta}\Big)^{2m+2}+\frac{C(N\delta)^2}{\nu}\Big)\norm{Hg}\norm{g}\,.
\end{align}
By taking
\begin{align}
\frac{\delta}{L_3}=c_3\Big(\frac{\nu}{|k|}\Big)^{\frac{1}{m+2}}\,,
\end{align}
with $c_3$ small enough, we obtain
\begin{align}
\norm{Hg}\geq \epsilon_0\nu^{\frac{m}{m+2}}|k|^{\frac{2}{m+2}}\norm{g}=\epsilon_0\nu^{\frac{m}{m+2}}|k|^{\frac{2}{m+2}}\,.
\end{align}
Here $\epsilon_0$ is some constant independent of $\nu$ and $k$. Finally we get
\begin{align}
\Psi(H_{\nu,k})\geq \epsilon_0\nu^{\frac{m}{m+2}}|k|^{\frac{2}{m+2}}\,.
\end{align}
When $\gamma=2$, the proof is similar. Instead we define
\begin{align*}
H&:=\nu\Delta_k^2+i k_1 u(y)\sin(2\pi y/L_3) +i k_2 u(y)\cos(2\pi y/L_3)-i\lambda\\
&=\nu \Delta_k^2+ i |k| \big( u(y)\sin(2\pi y/L_3+\alpha_k)-\tilde \lambda\big)\,, \quad \tilde \lambda := \frac{\lambda}{|k|}\,,
\end{align*}
and observe that
\begin{align}
\lan Hg,\chi g\ran &=\nu \lan \Delta^2_k g, \chi g\ran+i|k|\lan \big(u(y)\sin (2\pi y/L_3+\alpha_k)-\tilde \lambda\big)g, \chi g\ran\\
&=\nu\lan \Delta_k g, \chi'' g\ran+2\nu \lan \Delta_k g, \chi' \de_y g\ran+\nu\lan \Delta_k g, \chi \Delta_k g\ran+i|k|\lan  \big(u(y)\sin (2\pi y/L_3+\alpha_k)-\tilde \lambda\big)g, \chi g\ran\,.
\end{align}
This implies
\begin{align}
\Im \lan Hg, \chi g\ran=\nu\Im \lan \Delta_k g, \chi'' g\ran+ 2\nu \Im\lan \Delta_k g, \chi'\de_y g\ran+ |k|\lan  \big(u(y)\sin (2\pi y/L_3+\alpha_k)-\tilde \lambda\big)g, \chi g\ran\,.
\end{align}
Using the properties of $\chi$  and the interpolation inequality $\norm{\de_y g}^2\leq \norm{\Delta_k g}\norm{g}$, we further get
\begin{align}\label{e:lowerbdtmp1}
|k|\lan  \big(u(y)\sin (2 \pi y/L_3+\alpha_k)-\tilde \lambda\big)g, \chi g\ran \leq \norm{Hg}\norm{g}+\frac{c_2\nu}{\delta^2}\norm{\Delta_k g}\norm{g}+\frac{c_2\nu}{\delta}\norm{\Delta_k g}^{3/2}\norm{g}^{1/2}\,.
\end{align}
We note that $\norm{\Delta_k g}^2\leq 1/\nu \norm{Hg}\norm{g}$ since $\Re\lan Hg, g\ran=\nu \norm{\Delta_k g}^2$.
Combing this with~\eqref{e:lowerbdtmp1} and~\eqref{e:lowerbddelta2}, we get
\begin{align}\label{eq:estimateE}
&\int_E |g(y)|^2\dd y\leq \frac{1}{c_1 |k|} \left(\frac{L_3}{\delta}\right)^{m}\left[\| Hg\| \|g\|+\frac{c_2\nu}{\delta^2} \|\Delta_k  g \|\|g\|+\frac{c_2\nu}{\delta}  \|\Delta_k  g \|^{3/2}\|g\|^{1/2}\right]\notag\\
&\qquad\leq \frac{1}{c_1 |k|} \left(\frac{L_3}{\delta}\right)^{m}\| Hg\| \|g\|
+\tilde{c}_2\left(\left(\frac{\nu}{|k|\delta^2}\right)^2\left(\frac{L_3}{\delta}\right)^{2m} +\left(\frac{\nu}{|k|\delta}\right)^{4/3}\left(\frac{L_3}{\delta}\right)^{\frac{4m}{3}} \right)\|\Delta_k  g \|^2 +\frac14\|g\|^2\notag\\
&\qquad\leq \left(\frac{1}{c_1 |k|}\left(\frac{L_3}{\delta}\right)^{m}+\frac{\tilde{c}_2}{\nu}\left(\left(\frac{\nu}{|k|L_3^2}\right)^2\left(\frac{L_3}{\delta}\right)^{2m+4} +\left(\frac{\nu}{|k|L_3}\right)^{4/3}\left(\frac{L_3}{\delta}\right)^{\frac{4}{3}(m+1)} \right)\right)\| Hg\| \|g\|+\frac14\|g\|^2.
\end{align}
On the other hand,  we have
\begin{align}\label{eq:estimateEc}
\int_{E^c} |g(y)|^2\dd y\leq N\delta \|g\|^2_{L^\infty}&\leq CN\delta \left(\|g\|\|\de_yg\| +\| g\|^2\right)\notag\\
&\leq CN\delta \left(\|g\|^{3/2}\| \Delta_k g\|^{1/2} + \| g\|^2\right)\notag\\
&\leq C ( N\delta)^4\| \Delta_k g\|^2 +\frac{1}{2}\| g\|^2\notag\\
&\leq  \frac{C( N\delta)^4}{\nu}\|Hg\|\|g\|  +\frac{1}{2} \| g\|^2,
\end{align}
where we assume $\delta_0$ is small enough so that $\delta\leq 1/(4CN)$ for any $\delta\in (0, \delta_0)$. Adding up~\eqref{eq:estimateE} and~\eqref{eq:estimateEc} yields that
\begin{align}
\|g\|\leq 4\left(\frac{1}{c_1 |k|}\left(\frac{L_3}{\delta}\right)^{m}+\tilde{c}_2\frac{\nu}{(|k| L^3_2)^2}\left(\frac{L_3}{\delta}\right)^{2m+4}+\tilde{c}_2\frac{\nu^{1/3}}{\left(|k|L_3\right)^{4/3}}\left(\frac{L_3}{\delta}\right)^{\frac{4}{3}(m+1)} +\frac{C(N\delta)^4}{\nu}\right)\| Hg\|.
\end{align}
We then take
\begin{align}
\frac{\delta}{L_3}=c_3\Big(\frac{\nu}{|k|}\Big)^{\frac{1}{m+4}}\,,
\end{align}
with $c_3>0$ small enough, which then gives
\begin{align}
\norm{Hg}\geq \epsilon_0\nu^{\frac{m}{m+4}}|k|^{\frac{4}{m+4}}\norm{g}=\epsilon_0\nu^{\frac{m}{m+4}}|k|^{\frac{4}{m+4}}\,,
\end{align}
for some constant $\epsilon_0$ independent of $\nu$ and $k$. Hence we get
\begin{align}
\Psi(H_{\nu,k})\geq \epsilon_0\nu^{\frac{m}{m+4}}|k|^{\frac{4}{m+4}}\,.
\end{align}
as desired.

\end{proof}

\begin{example}\label{exam:2.1}
We give two examples of the choice of $u$ on $\T^1=[0, L_3]$.
\begin{itemize}
\item Take $u(y)=1$. Then Assumption~\eqref{a:lowerbddE} holds with $m=2$.
\item Take $u(y)=\cos (2\pi y/L_3)$. Then Assumption~\eqref{a:lowerbddE} holds with $m=2$.
\end{itemize}
\end{example}
The detailed verification heavily depends on the explanation of Example 3.1 in~\cite{ZelatiDolceFengMazzucato2021global}. There it was verified that:

\textit{There exist $m$, $N\in\N$, $c_1>0$ and $\delta_0 \in (0, 2\pi)$ with the property that for any $\lambda\in \R$ and any $\delta\in (0, \delta_0)$, there exist $n\leq N$ and points $y_1,\cdots,y_n\in [0, 2\pi) $ such that
\begin{align}\label{e:}
|(\sin y)^m-\lambda|\geq c_1\delta^{\max{(m,2)}}\,,\quad \forall |y-y_j|\geq \delta, \forall j\in\{ 1,\cdots, n\}\,.
\end{align}
}
Here in our case, when $u=1$,  the estimate~\eqref{eq:lowerbddE2} follows directly by changing variables. When $u=\cos(2\pi y/L_3)$, we note that $|\cos(2\pi y/L_3)\sin(2\pi y/L_3+\alpha) -\lambda|=\frac{1}{2}|\sin(4\pi y/L_3+\alpha)+\sin \alpha -2\lambda|$. Then the result follows by using a standard rescaling argument.

\section{Global existence of the 3D Kuramoto-Sivashinsky equation with planar helical flow}\label{s:KS}
In this section, we deal with the global existence of the three dimensional advective Kuramoto-Sivashinsky equation. As proved in~\cite{FengmMazzucato2020global}, by showing the mild solution is a weak solution, the well-posedness of the solution is ensured once the $L^2$ norm of the solution is bounded. Hence to prove the global existence, we only need to prove boundedness of the $L^2$ norm. The idea of the proof is similar to the two dimensional case with shear flows in~\cite{ZelatiDolceFengMazzucato2021global}. The main difference lies in the Sobolev inequalities we use in the argument since the dimension has changed. We write the full proof here for the completeness.

Given $f\in L^2(\T^3)$, we denote
\begin{equation}\label{e:decompose}
\fz(y)=\frac{1}{L_1L_2}\int_{\T^2} f(x_1,x_2,y) \dd x_1\dd x_2, \qquad
 \fn(x_1,x_2,y)=f(x_1,x_2,y)-\fz(y).
\end{equation}
We observe that $\lan f\ran$ corresponds to the projection of  $f$ onto the kernel of the transport operator $u(y)\sin y \,\pa_{x_1}+u(y)\cos y \,\pa_{x_2}$, while $ f_{\neq}$ corresponds to the projection onto the orthogonal complement in $L^2$. Let $\phi$ be the solution of~\eqref{eq:AKSE}. We then derive the equation for $\pz$ and $\pn$. The equation for $\pz$ reads as
\begin{align}\label{e:pz}
\partial_t\pz+\frac{\nu}{2L_1L_2}\int_{\T^2}|\nabla \pn+\nabla\pz|^2\,\dd x_1\,\dd{x_2}+\nu\partial_{y}^4\pz+\nu\partial_{y}^2\pz=0\,,
\end{align}
while $\pn$ satisfies
\begin{align}\label{e:pn}
\partial_t\pn+u(y)\sin y\,\partial_{x_1}\pn+u(y)\cos y\,\partial_{x_2}\pn+\nu\Delta^2\pn
=-\frac{\nu}{2}|\nabla \pn|^2+\frac{\nu}{2}\lan |\nabla\pn|^2\ran-\nu\partial_y\pn\partial_y\pz-\nu\Delta\pn\,.
\end{align}
In the above equation, the kernel component interacts with the projected one only through the term $\de_y \l \phi\r$. For notational ease, we denote $\psi=\partial_y\pz$ and get
\begin{align}\label{e:psi}
\partial_t\psi+\frac{\nu}{2L_1L_2}\int_{\T^2}\partial_y|\nabla \pn|^2\,\dd x_1\dd x_2+\nu\psi\partial_y\psi+\nu\partial_y^4\psi+\nu\partial_y^2\psi=0\,.
\end{align}

Denote $\cS_t=\e^{-t H_\nu}$. Then by Duhamel's formula, we have
\begin{align} \label{eq:pnMild}
\pn(s+t)&=\cS_t(\pn(s))+\int_s^{s+t}\cS_{t+s-\tau}\Big(-\frac{\nu}{2}|\nabla\pn(\tau)|^2
+\frac{\nu}{2}\lan |\nabla\pn(\tau)|^2\ran -\nu\psi(\tau)\partial_y\pn(\tau)-\nu\Delta\pn(\tau)\Big)\,\dd \tau.
\end{align}
Using~\eqref{e:derate}, it implies that
\begin{align}\label{e:duhamel1}
\norm{\pn(t+s)}_{L^2}&\leq \norm{\cS_t(\pn(s))}_{L^2}+C\nu\int_s^{s+t}\big(\norm{\nabla \pn}_{L^4}^2+\norm{\psi}_{L^4_y}\norm{\nabla\pn}_{L^4}+\norm{\Delta\pn}_{L^2}\big)\,\dd \tau\,.
\end{align}
We recall the following Gagliardo-Nirenberg interpolation inequalities on $\T^3$ and $\T^1$ respectively:
\begin{align}\label{e:soblev1}
\norm{\nabla \pn}_{L^4}\leq C\norm{\pn}_{L^2}^{1/8}\norm{\Delta \pn}^{7/8}_{L^2}\,,\quad \norm{\psi}_{L^4_y}\leq C\norm{\psi}_{L^2_y}^{7/8}\norm{\partial_y^2\psi}_{L^2_y}^{1/8}\,.
\end{align}
Then it follows from~\eqref{e:duhamel1} that
\begin{align}\label{e:duhamel}
\nonumber
\norm{\pn(t+s)}_{L^2}&\leq \norm{\cS_t(\pn(s))}_{L^2}+C\nu\int_s^{s+t}\big(\norm{\pn}_{L^2}^{1/4}\norm{\Delta\pn}_{L^2}^{7/4}+\norm{\Delta\pn}_{L^2}\\
\nonumber
&\qquad\qquad\qquad\qquad\qquad\qquad+\norm{\pn}_{L^2}^{1/8}\norm{\Delta\pn}_{L^2}^{7/8}\norm{\psi}_{L^2_y}^{7/8}\norm{\pa^2_y\psi}_{L^2_y}^{1/8}\big)\,\dd \tau\\
&\leq \norm{\cS_t(\pn(s))}_{L^2}+C\nu\int_s^{s+t}\big(\norm{\pn}_{L^2}^{1/4}\norm{\Delta\pn}_{L^2}^{7/4}+\norm{\Delta\pn}_{L^2}+\norm{\pn}_{L^2}^{1/8}\norm{\Delta\pn}_{L^2}^{7/8}\norm{\pa^2_y\psi}_{L^2_y}\big)\,\dd \tau\,,
\end{align}
where we used the fact $\norm{\psi}_{L^2_y}\leq C\norm{\partial_y^2\psi}_{L^2_y}$ since $\psi$ is mean free.

On the other hand, we have the following $L^2$ energy estimate,
\begin{align} \label{eq:FirstEnergyEst}
\nonumber
\frac{1}{2}\ddt\norm{\pn}_{L^2}^2+\nu \norm{\Delta \pn}_{L^2}^2&=-\frac{\nu}{2}\int_{\T^3}|\nabla \pn|^2\pn\,\dd x_1\dd x_2\dd y+\frac{\nu}{2L_1L_2}\int_{\T^3}\left(\int_{\T^2}|\nabla\pn|^2\,\dd x_1\dd x_2\right)\pn\,\dd x_1\dd x_2\dd y\\
\nonumber
&\qquad \qquad -\nu\int_{\T^3}\psi\partial_y\pn\pn\,\dd x_1\dd x_2\dd y+\nu\norm{\nabla \pn}_{L^2}^2\\
&\leq C\nu \norm{\nabla\pn}_{L^4}^2\norm{\pn}_{L^2}+C\nu\norm{\psi}_{L^2_y}\norm{\nabla\pn}_{L^4}\norm{\pn}_{L^4}+\nu \norm{\nabla \pn}_{L^2}^2\,.
\end{align}
Use again the Gargliardo-Nirenberg interpolation inequalities in~\eqref{e:soblev1} and
\begin{align}\label{e:soblev}
\norm{\pn}_{L^4}&\leq C\norm{\pn}_{L^2}^{5/8}\norm{\Delta\pn}_{L^2}^{3/8}\,.
\end{align}
Utilizing all these embedding inequalities, we get
\begin{align}
\frac{1}{2}\ddt\norm{\pn}_{L^2}^2+\nu \norm{\Delta \pn}_{L^2}^2
&\leq C\nu\norm{\pn}_{L^2}^{5/4}\norm{\Delta\pn}_{L^2}^{7/4}+C\nu\norm{\psi}_{L^2_y}\norm{\pn}_{L^2}^{3/4}\norm{\Delta \pn}_{L^2}^{5/4}\nonumber\\
&\quad+\nu\norm{\pn}_{L^2}^{1/4}\norm{\Delta\pn}_{L^2}^{7/4}\,.
\end{align}
Finally, applying Young's inequality, we obtain
\begin{align}\label{e:energypn}
\ddt\norm{\pn}_{L^2}^2+\nu \norm{\Delta \pn}_{L^2}^2
&\leq C\nu \norm{\pn}_{L^2}^{10}+C\nu\norm{\pn}_{L^2}^2+C\nu\norm{\pn}_{L^2}^2\norm{\psi}_{L^2_y}^{8/3}\,.
\end{align}

In view of~\eqref{e:duhamel},~\eqref{e:energypn} and decaying property of the semigroup the $e^{-t H_\nu}$ in Corollary~\ref{c:decay},  for all sufficiently small times $t\geq s\geq 0$ we can assume that
\begin{enumerate} [label=(H\arabic*), ref=(H\arabic*)]
\item \label{i:bootstrap1} $\norm{\pn(t)}_{L^2}\leq 8 \e^{-\lambda_\nu( t-s)/4}\norm{\pn(s)}_{L^2}$,
\item  \label{i:bootstrap2} $\nu \int_s^t \norm{\Delta\pn(\tau)}_{L^2}^2\,\dd \tau\leq  4 \norm{\pn(s)}_{L^2}^2$.
\end{enumerate}
We refer to \ref{i:bootstrap1}-\ref{i:bootstrap2} as the {\em bootstrap assumptions}. Let $t_0>0$ be the maximal time such that the bootstrap assumptions above hold on $[0,t_0]$. We will later focus on proving by choosing $\nu_0$ sufficiently small, then for all $\nu\leq \nu_0$, we could always have the following refined estimates:
\begin{enumerate} [label=(B\arabic*), ref=(B\arabic*)]
\item \label{i:bootstrap3} $\norm{\pn(t)}_{L^2}\leq 4 \e^{-\lambda_\nu( t-s)/4}\norm{\pn(s)}_{L^2}$,
\item  \label{i:bootstrap4} $\nu \int_s^t \norm{\Delta\pn(\tau)}_{L^2}^2\,\dd \tau\leq  2 \norm{\pn(s)}_{L^2}^2$,
\end{enumerate}
for all $0\leq s\leq t\leq t_0$. We refer to \ref{i:bootstrap3}-\ref{i:bootstrap4} as the {\em bootstrap estimates}.

 Assuming~\ref{i:bootstrap1} and~\ref{i:bootstrap2} , we can achieve suitable bounds of $\psi$ as long as the restricted domain satisfies no growing modes. We state it in the next lemma.

\begin{lemma}\label{l:linftyl2}
Let the domain $\T^3=[0,L_1]\times[0, L_2]\times[0, L_3]$ satisfy $L_3< 2\pi$. Assume the bootstrap assumptions~\ref{i:bootstrap1} and~\ref{i:bootstrap2}.   There exists a $\nu$-independent constant $C_1=C_1(L_3, \norm{\phi_0}_{L^2}, \norm{\psi(0)}_{L^2})$ satisfying
\begin{align}\label{e:partialpsi}
\norm{\psi(t)}_{L^2_y}^2+\nu\int_0^t\norm{\pa^2_y\psi(s)}_{L^2_y}^2\,\dd s\leq C_1\,,
\end{align}
for all $t\in [0,t_0]$.
\end{lemma}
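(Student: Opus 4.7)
The plan is to run an $L^2_y$ energy estimate on the evolution~\eqref{e:psi}. Pairing with $\psi$ and integrating in $y$, the cubic transport term drops because $\int_{\T^1}\psi^2\pa_y\psi\,\dd y=\tfrac13\int_{\T^1}\pa_y(\psi^3)\,\dd y=0$, the biharmonic part contributes the good $+\nu\|\pa_y^2\psi\|_{L^2_y}^2$, and the Laplacian contributes the destabilizing $-\nu\|\pa_y\psi\|_{L^2_y}^2$. A single integration by parts in $y$ shifts the derivative off the slice integral and gives a forcing of the form
\[
\frac{\nu}{2L_1L_2}\int_{\T^1}\pa_y\psi(y)\Big(\int_{\T^2}|\nabla\pn|^2\,\dd x_1\,\dd x_2\Big)\dd y,
\]
so the whole task is to read this forcing through the bootstrap budget~\ref{i:bootstrap1}--\ref{i:bootstrap2}.

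The first essential step is to neutralize the unstable $\nu\|\pa_y\psi\|_{L^2_y}^2$ using $L_3<2\pi$. Because $\pz(t,\cdot)$ is $L_3$-periodic, both $\psi=\pa_y\pz$ and $\pa_y\psi$ are mean-free on $[0,L_3]$; the sharp Wirtinger inequality then gives $\|\pa_y\psi\|_{L^2_y}^2\leq (L_3/2\pi)^2\|\pa_y^2\psi\|_{L^2_y}^2$, leaving a strictly positive constant $\beta:=1-(L_3/2\pi)^2>0$ in front of $\nu\|\pa_y^2\psi\|_{L^2_y}^2$. This is precisely where the ``no growing mode in the kernel direction'' assumption is used, in exact analogy with the 2D shear flow case of~\cite{ZelatiDolceFengMazzucato2021global}.

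For the nonlinear forcing I would pair $\pa_y\psi$ in $L^\infty_y$ against the slice integral in $L^1_y$, then use the one-dimensional Sobolev embedding $\|\pa_y\psi\|_{L^\infty_y}\leq C\|\pa_y^2\psi\|_{L^2_y}$ (valid because $\pa_y\psi$ is mean-free) together with the integration-by-parts bound $\|\nabla\pn\|_{L^2(\T^3)}^2\leq \|\pn\|_{L^2}\|\Delta\pn\|_{L^2}$. A Young's inequality reabsorbs half of the coercive term and yields
\[
\ddt\|\psi\|_{L^2_y}^2+\tfrac{\beta\nu}{2}\|\pa_y^2\psi\|_{L^2_y}^2\leq C\nu\|\pn\|_{L^2}^2\|\Delta\pn\|_{L^2}^2.
\]
Integrating in $t$, applying~\ref{i:bootstrap1} with $s=0$ to bound $\|\pn(\tau)\|_{L^2}^2\leq 64\|\phi_0\|_{L^2}^2$ uniformly, and applying~\ref{i:bootstrap2} to bound $\nu\int_0^{t}\|\Delta\pn(\tau)\|_{L^2}^2\,\dd\tau\leq 4\|\phi_0\|_{L^2}^2$ closes~\eqref{e:partialpsi} with $C_1=C_1(L_3,\|\phi_0\|_{L^2},\|\psi(0)\|_{L^2_y})$.

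The main obstacle is choosing the correct dual pairing for the forcing term. A naive Cauchy--Schwarz in $L^2_y$ would force the use of the three-dimensional Gagliardo--Nirenberg bound $\|\nabla\pn\|_{L^4}^2\leq C\|\pn\|_{L^2}^{1/4}\|\Delta\pn\|_{L^2}^{7/4}$, producing a time integrand with $\|\Delta\pn\|_{L^2}^{7/4}$ that is not controlled by the $L^1_t$ budget $\nu\int\|\Delta\pn\|_{L^2}^2\,\dd\tau$ afforded by~\ref{i:bootstrap2}. Pairing in $L^\infty_y$--$L^1_y$ instead sacrifices some regularity on $\pa_y\psi$, which is paid for exactly by the $L_3<2\pi$ coercivity, and trades it for the match $\|\pn\|\|\Delta\pn\|$ that the bootstrap hypotheses can absorb.
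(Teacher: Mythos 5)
Your proof is correct, but it closes the estimate by a genuinely different route than the paper. The paper does \emph{not} integrate by parts in $y$: it keeps the forcing as $\frac{\nu}{L_1L_2}\int(\pa_y\nabla\pn\cdot\nabla\pn)\psi$, applies H\"older with exponents $(4,2,4)$ to get $\norm{\nabla\pn}_{L^4}\norm{\Delta\pn}_{L^2}\norm{\psi}_{L^4_y}$, invokes the 3D and 1D Gagliardo--Nirenberg inequalities to reach $\norm{\pn}_{L^2}^{1/8}\norm{\Delta\pn}_{L^2}^{15/8}\norm{\psi}_{L^2_y}^{7/8}\norm{\pa^2_y\psi}_{L^2_y}^{1/8}$, and then uses Young with exponents $(16/15,16)$ to absorb the $\norm{\pa_y^2\psi}^{1/8}$ factor and promote $\norm{\Delta\pn}^{15/8}$ to $\norm{\Delta\pn}^{2}$; because the resulting forcing still carries $\norm{\psi}_{L^2_y}^{14/15}$, the paper must finish with an integrating-factor/Gr\"onwall argument. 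You instead move the derivative onto $\psi$, pair $L^\infty_y$ against $L^1_y$, and use only the elementary facts $\norm{\pa_y\psi}_{L^\infty_y}\le C\norm{\pa_y^2\psi}_{L^2_y}$ and $\norm{\nabla\pn}_{L^2}^2\le\norm{\pn}_{L^2}\norm{\Delta\pn}_{L^2}$; after one Young absorption the forcing $C\nu\norm{\pn}_{L^2}^2\norm{\Delta\pn}_{L^2}^2$ is free of $\psi$, so a direct time integration against the bootstrap budget~\ref{i:bootstrap1}--\ref{i:bootstrap2} suffices and no Gr\"onwall step is needed. Both arguments use $L_3<2\pi$ in exactly the same way (Poincar\'e/Wirtinger to dominate $\nu\norm{\pa_y\psi}_{L^2_y}^2$), and both yield a $C_1$ of the advertised form; yours is the more elementary and gives a cleaner constant. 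One small inaccuracy in your closing paragraph: the $L^2_y$--$L^4$ pairing does \emph{not} actually fail --- the extra $\norm{\pa_y^2\psi}_{L^2_y}^{1/8}$ factor coming from $\norm{\psi}_{L^4_y}$ is exactly what lets Young's inequality lift $\norm{\Delta\pn}_{L^2}^{15/8}$ to the integrable power $\norm{\Delta\pn}_{L^2}^{2}$, which is how the paper proceeds --- but this does not affect the validity of your own argument.
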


\begin{proof}
Applying the energy estimate to~\eqref{e:psi} and using Poincar\'e inequality , we get
\begin{align}\label{e:psi2}
\frac{1}{2}\ddt\norm{\psi}_{L^2_y}^2+\nu \norm{\pa^2_y\psi}_{L^2_y}^2&=\nu\norm{\partial_y\psi}_{L^2_y}^2-\frac{\nu}{2L_1L_2}\int_{\T^1}\int_{\T^2}(\pa_y\nabla \pn \cdot \nabla \pn)\psi\,\dd x_1\dd x_2\dd y\\
&\leq \nu\Big(\frac{L_3}{2\pi}\Big)^2\norm{\partial^2_y\psi}_{L^2_y}^2+\frac{\nu}{2L_1L_2} \norm{\nabla \pn}_{L^4}\norm{\Delta \pn}_{L^2}\norm{\psi}_{L^4_y}\,.
\end{align}
We recall the Gagliardo-Nirenberg inequalities:
\begin{align*}
&\norm{\psi}_{L^4_y}\leq \norm{\psi}_{L_y^2}^{7/8}\norm{\partial_y^2\psi}_{L^2_y}^{1/8}\,,\\
&\norm{\nabla \pn}_{L^4}\leq C\norm{\pn}_{L^2}^{1/8}\norm{\Delta\pn}_{L^2}^{7/8}.
\end{align*}
Now estimate~\eqref{e:psi2} becomes
\begin{align*}
\frac{1}{2}\ddt\norm{\psi}_{L^2_y}^2+\nu \Big(1-\big(\frac{L_3}{2\pi}\big)^2\Big)\norm{\pa^2_y\psi}_{L^2_y}^2
&\leq C\nu\norm{\pn}_{L^2}^{1/8}\norm{\Delta \pn}_{L^2}^{15/8}\norm{\psi}_{L^2_y}^{7/8}\norm{\pa^2_y\psi}_{L^2_y}^{1/8}\,.
\end{align*}
Applying Young's inequality, it then yields 
\begin{align}
\label{e:psitmp1}
\nonumber
\ddt\norm{\psi}_{L^2_y}^2+\nu\Big(1-\big(\frac{L_3}{2\pi}\big)^2\Big) \norm{\pa^2_y\psi}_{L^2_y}^2&\leq  C\nu \norm{\pn}_{L^2}^{2/15}\norm{\Delta\pn}_{L^2}^{2}\norm{\psi}_{L^2_y}^{14/15} \\
&\leq C\nu \norm{\pn}_{L^2}^{2/15}\norm{\Delta\pn}_{L^2}^2+C\nu \norm{\pn}_{L^2}^{2/15}\norm{\Delta\pn}_{L^2}^2\norm{\psi}_{L^2_y}^2\,.
\end{align}
We then define the integrating factor $\mu=\exp\big( -C\nu \int_0^t\norm{\pn}_{L^2}^{2/15}\norm{\Delta \pn}_{L^2}^2\,ds\big)$ and apply the bootstrap assumptions~\ref{i:bootstrap1} and~\ref{i:bootstrap2} to get
\begin{align}\label{e:psibd}
\nonumber
\norm{\psi(t)}_{L^2_y}^2&\leq  C\nu\mu^{-1}\int_0^t\norm{\pn}_{L^2}^{2/15}\norm{\Delta \pn}_{L^2}^2\,ds+ \mu^{-1}\norm{\psi(0)}_{L^2_y}^2\\
&\leq  Ce^{C\norm{\phi_0}_{L^2}^{32/15}}\norm{\phi_0}_{L^2}^{32/15}+e^{C\norm{\phi_0}_{L^2}^{32/15}}\norm{\psi(0)}_{L^2_y}^{2}\,.
\end{align}
 Using~\eqref{e:psibd} to~\eqref{e:psitmp1}, we get the desired result~\eqref{e:partialpsi}. This finishes the proof.
\end{proof}

The bootstrap estimates~\ref{i:bootstrap3} and~\ref{i:bootstrap4} will be achieved through a couple of lemmas. We will postpone the proof and assume the fact that bootstrap estimates~\ref{i:bootstrap3} and~\ref{i:bootstrap4} hold on $[0, t_0]$. We next prove Theorem~\eqref{t:KS} to get the global well-posedness of the advective Kuramoto Sivashinsky system.

\begin{proof}[Proof of Theorem \ref{t:KS}]
First from the bootstrap estimates~\ref{i:bootstrap3}, ~\ref{i:bootstrap4} and the definition of $t_0$, we must have $t_0=\infty$. Hence it holds that $\pn\in L^\infty([0,\infty);L^2(\TT^3))\cap L^2([0,\infty);H^2(\T^3))$. Recall Lemma~\ref{l:linftyl2}, we have $\psi\in L_{loc}^\infty([0,\infty);L^2(\T^1))\cap L_{loc}^2([0,\infty);H^2(\T^1))$. Applying the triangle and Poincar\'e's inequalities, we then get $\Delta\phi\in L_{loc}^2([0,\infty);L^2(\T^3))$. We further denote
\begin{align}\label{eq:defphibar}
\bar \phi=\frac{1}{L_1L_2L_3}\int_{\T^3}\phi(x_1,x_2,y)\,\dd x_1\dd x_2\dd y= \frac{1}{L_3}\int_{\T^1}\pz\,\dd y\,.
\end{align}
Then $\bar \phi$ satisfies
\begin{align}
\partial_t\bar \phi&=-\frac{\nu}{2L_1L_2L_3}\int_{\T^3}|\nabla \pn+\nabla\pz|^2\,\dd x_2\dd x_2\dd y =-\frac{\nu}{2L_1L_2L_3}\int_{\T^3} |\nabla \pn|^2\,\dd x_1\dd x_2\dd y-\frac{\nu}{2L_3}\int_{\T^1}|\psi|^2\,\dd y\,.
\end{align}
By integrating the above equation and applying estimate~\eqref{e:partialpsi} and~\ref{i:bootstrap4}, we obtain $\bar\phi\in L_{loc}^\infty ([0,\infty))$. Recall Lemma~\ref{l:linftyl2}, we have
$\psi\in L_{loc}^\infty([0,\infty);L^2(\T^1))$. Hence, we get  $\pz\in L_{loc}^\infty([0,\infty);L^2(\T^1))$ by the Poincar\'e inequality, which then implies $\phi\in L_{loc}^\infty([0,\infty);L^2(\T^3))$.
Finally since $\nabla^2 \phi=\nabla^2 \pn +\de_y \psi$, we obtain $\phi \in L^2_{loc}([0,\infty);H^2(\T^3))$.
\end{proof}

For the rest of this section, we will focus on proving the booststrap estimates~\ref{i:bootstrap3} and~\ref{i:bootstrap4}. We prove~\ref{i:bootstrap4} first.

\begin{lemma}\label{lem:B2}
Let the domain $\T^3=[0,L_1]\times[0, L_2]\times[0, L_3]$ satisfy $L_3< 2\pi$. Assume the bootstrap assumptions~\ref{i:bootstrap1} and~\ref{i:bootstrap2}. There exists $\nu_0=\nu_0(\phi_0)$ with the following property: for any $0\leq s\leq t\leq t_0$ and for any $\nu\leq \nu_0$,
 it holds that
\begin{align}\label{e:B2}
\nu \int_s^t \norm{\Delta\pn(\tau)}_{L^2}^2\,\dd s\leq  2\norm{\pn(s)}_{L^2}^2\,.
\end{align}
\end{lemma}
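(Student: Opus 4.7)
The plan is to integrate the already-derived $L^2$ energy inequality
\begin{equation*}
\ddt\norm{\pn}_{L^2}^2+\nu \norm{\Delta \pn}_{L^2}^2
\leq C\nu \norm{\pn}_{L^2}^{10}+C\nu\norm{\pn}_{L^2}^2+C\nu\norm{\pn}_{L^2}^2\norm{\psi}_{L^2_y}^{8/3}
\end{equation*}
in time from $s$ to $t$, then absorb every term on the right-hand side using the bootstrap assumption \ref{i:bootstrap1} and the uniform bound on $\psi$ from Lemma~\ref{l:linftyl2}. The telescoping yields
\begin{equation*}
\norm{\pn(t)}_{L^2}^2+\nu \int_s^t\norm{\Delta \pn(\tau)}_{L^2}^2\,\dd\tau
\leq \norm{\pn(s)}_{L^2}^2 + C\nu\int_s^t\!\!\bigl(\norm{\pn}_{L^2}^{10}+\norm{\pn}_{L^2}^2+\norm{\pn}_{L^2}^2\norm{\psi}_{L^2_y}^{8/3}\bigr)\dd\tau,
\end{equation*}
so the goal reduces to showing that the last $\nu$-prefactored integral is $\leq\norm{\pn(s)}_{L^2}^2$ once $\nu$ is small.

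The key mechanism is exponential decay. Because $\pn$ is orthogonal to the kernel, its nonzero Fourier support has $|k|\geq k_*>0$; hence by \ref{i:bootstrap1}, $\norm{\pn(\tau)}_{L^2}\leq 8\e^{-\lambda_\nu(\tau-s)/4}\norm{\pn(s)}_{L^2}$ with a strictly positive rate $\lambda_\nu\gtrsim \nu^{m/(m+2)}k_*^{2/(m+2)}$ coming from Corollary~\ref{c:decay}. Integrating the geometric decay gives
\begin{equation*}
\int_s^t \norm{\pn(\tau)}_{L^2}^{p}\,\dd\tau \leq \frac{C_p}{\lambda_\nu}\norm{\pn(s)}_{L^2}^{p},\qquad p=2,\,10.
\end{equation*}
For the mixed term I use Lemma~\ref{l:linftyl2} to bound $\norm{\psi(\tau)}_{L^2_y}^{8/3}\leq C_1^{4/3}$ uniformly, and then the same exponential-decay integration controls $\int_s^t\norm{\pn}_{L^2}^2\norm{\psi}_{L^2_y}^{8/3}\dd\tau$ by $C C_1^{4/3}\lambda_\nu^{-1}\norm{\pn(s)}_{L^2}^2$. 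Combining these and factoring out $\norm{\pn(s)}_{L^2}^2$, the required bound reduces to
\begin{equation*}
\frac{C\nu}{\lambda_\nu}\bigl(\norm{\pn(s)}_{L^2}^{8}+1+C_1^{4/3}\bigr)\leq 1.
\end{equation*}

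Since \ref{i:bootstrap1} also gives $\norm{\pn(s)}_{L^2}\leq 8\norm{\phi_0}_{L^2}$, the bracket is bounded by a constant $M=M(\phi_0,C_1)$ depending only on the initial data and the Lemma~\ref{l:linftyl2} constant. Thus it suffices to impose
\begin{equation*}
\frac{\nu}{\lambda_\nu}\leq \frac{1}{CM(\phi_0)},
\end{equation*}
and by Corollary~\ref{c:decay} this ratio is $O(\nu^{2/(m+2)})\to 0$ as $\nu\to 0$, so we may choose $\nu_0=\nu_0(L_1,L_2,L_3,u,\phi_0)$ small enough that the inequality holds for every $\nu<\nu_0$. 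The only mild subtlety is that the rate $\lambda_\nu$ in \ref{i:bootstrap1} must be interpreted mode-by-mode and then summed via Plancherel; since $\pn$ has no zero horizontal mode, this causes no loss. No step is genuinely difficult; the substantive content is that the dissipation-enhancement gain $\nu/\lambda_\nu\to 0$ beats the nonlinear price accumulated over the (now only exponentially long) lifetime of $\pn$.
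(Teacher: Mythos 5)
Your proposal is correct and follows essentially the same route as the paper: integrate the energy estimate \eqref{e:energypn} from $s$ to $t$, use the bootstrap assumption \ref{i:bootstrap1} together with the uniform bound on $\psi$ from Lemma~\ref{l:linftyl2} to control the right-hand side by $C(\nu/\lambda_\nu)\bigl(1+C_1^{4/3}+\norm{\pn(0)}_{L^2}^{8}\bigr)\norm{\pn(s)}_{L^2}^2$, and then choose $\nu_0$ so that $\nu/\lambda_\nu$ beats this constant. The only cosmetic difference is your aside about interpreting $\lambda_\nu$ mode-by-mode, which the paper also leaves implicit.
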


\begin{proof}
 Using the bootstrap assumptions, Lemma~\ref{l:linftyl2} and the energy estimate~\eqref{e:energypn}, one obtains
\begin{align}
\nu\int_s^t\norm{\Delta\pn(\tau)}_{L^2}^2\,\dd \tau&\leq \norm{\pn(s)}_{L^2}^2+C\nu\int_s^t \norm{\pn(\tau)}_{L^2}^{10}+(1+C_1^{4/3})\norm{\pn(\tau)}_{L^2}^2\,\dd \tau\notag\\
&\leq \norm{\pn(s)}_{L^2}^2+C\nu\int_s^t (1+C_1^{4/3}+\norm{\pn(0)}_{L^2}^8)e^{-\lambda_\nu (\tau -s)/2}\norm{\pn(s)}_{L^2}^2\,d\tau   \\
&\leq \norm{\pn(s)}_{L^2}^2+\frac{\nu}{\lambda_\nu}C(\norm{\pn(0)}_{L^2}^8+1+C_1^{4/3})\norm{\pn(s)}_{L^2}^2\,.
\label{bd:L2L2Delta}
\end{align}
Since $\nu/\lambda_\nu\to 0$ as $\nu\to 0$, we choose $\nu_0$ satisfying
$$
    \frac{\nu_0}{\lambda_{\nu_0}} \leq \frac{1}{C(1+C_1^{4/3}+\norm{\pn(0)}_{L^2}^8)}\,.
$$
We then take any $\nu<\nu_0$ in ~\eqref{bd:L2L2Delta} and obtain~\eqref{e:B2} as desired.
\end{proof}

We are left to prove \ref{i:bootstrap3}, which we accomplish in different steps. We first prove that with the help of the planar helical flow, the energy dissipates much faster and hence we will have a constant fraction decay of $\norm{\pn}_{L^2}$ after a fixed length of time. We state this in the next lemma.

\begin{lemma}\label{l:decaytau}
Let the domain $\T^3=[0,L_1]\times[0, L_2]\times[0, L_3]$ satisfy $L_3< 2\pi$. Assume the bootstrap assumptions~\ref{i:bootstrap1} and~\ref{i:bootstrap2}, and take  $\tau^*=4/\lambda_\nu$.
There exists $\nu_0=\nu_0(\norm{\pn(0)}_{L^2})$, such that for any $s\in [0,t_0]$ satisfying $s+\tau^*\leq t_0$ and for any $\nu\leq \nu_0$,
 \begin{align}
 \norm{\pn(\tau^*+s)}_{L^2}\leq \frac{1}{\e}\norm{\pn(s)}_{L^2}\,.
 \end{align}
\end{lemma}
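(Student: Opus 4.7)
The plan is to apply the Duhamel representation~\eqref{e:duhamel} on the interval $[s, s+\tau^*]$ with $\tau^*=4/\lambda_\nu$, use the semigroup decay from Corollary~\ref{c:decay} to handle the linear part, and control the nonlinear integral by combining the bootstrap assumptions \ref{i:bootstrap1}--\ref{i:bootstrap2} with Lemma~\ref{l:linftyl2}; the smallness of $\nu/\lambda_\nu$ will absorb the nonlinear contribution.

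First, since $\pn(s)$ has zero projection onto the kernel of the transport operator, Corollary~\ref{c:decay} yields
\begin{align*}
\norm{\cS_{\tau^*}(\pn(s))}_{L^2}\leq \e^{-\lambda_\nu\tau^*+\pi/2}\norm{\pn(s)}_{L^2}=\e^{-4+\pi/2}\norm{\pn(s)}_{L^2},
\end{align*}
which is strictly smaller than $1/(2\e)$. For the nonlinear piece, I apply H\"older in time to the three terms appearing in~\eqref{e:duhamel}. For the first term,
\begin{align*}
\nu\int_s^{s+\tau^*}\norm{\pn}_{L^2}^{1/4}\norm{\Delta\pn}_{L^2}^{7/4}\,\dd\tau
\leq \nu\,\norm{\pn}_{L^\infty_\tau L^2}^{1/4}\,(\tau^*)^{1/8}\Big(\int_s^{s+\tau^*}\norm{\Delta\pn}_{L^2}^2\,\dd\tau\Big)^{7/8},
\end{align*}
which by \ref{i:bootstrap1} and \ref{i:bootstrap2} is bounded above by $C(\nu\tau^*)^{1/8}\norm{\pn(s)}_{L^2}^2$. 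The second term is handled similarly and is bounded by $C\nu^{1/2}(\tau^*)^{1/2}\norm{\pn(s)}_{L^2}$. For the third term, splitting the integrand with H\"older as $\|\pn\|^{1/8}\|\Delta\pn\|^{7/8}\|\partial_y^2\psi\|_{L^2_y}$ and using \ref{i:bootstrap1}, \ref{i:bootstrap2}, together with the bound $\nu\int_0^{t_0}\norm{\pa_y^2\psi}_{L^2_y}^2\,\dd\tau\leq C_1$ from Lemma~\ref{l:linftyl2}, produces a bound of the form $C(\nu\tau^*)^{1/16}\norm{\pn(s)}_{L^2}$ up to constants depending on $C_1$ and $\norm{\pn(0)}_{L^2}$.

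Putting the three estimates together and recalling $\nu\tau^*=4\nu/\lambda_\nu\to 0$ as $\nu\to 0$, one can pick $\nu_0=\nu_0(\norm{\pn(0)}_{L^2})$ so small that the sum of the three nonlinear contributions is less than $\big(1/\e-\e^{-4+\pi/2}\big)\norm{\pn(s)}_{L^2}$ for every $\nu<\nu_0$. Combined with the linear bound, this yields $\norm{\pn(s+\tau^*)}_{L^2}\leq \e^{-1}\norm{\pn(s)}_{L^2}$.

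The main obstacle I expect is bookkeeping the competing powers of $\nu$, $\lambda_\nu$, and $\tau^*$ in the three nonlinear terms: the first is naturally written in terms of $\nu\int\norm{\Delta\pn}^2$ (which is $O(1)$ by \ref{i:bootstrap2}), the third requires an extra H\"older split to separate $\|\pa_y^2\psi\|_{L^2_y}$ so that Lemma~\ref{l:linftyl2} can be used, and one has to check that after all interpolations the surviving small parameter is a positive power of $\nu/\lambda_\nu$. The linear factor $\e^{-4+\pi/2}<1/(2\e)$ leaves a fixed gap that the nonlinear piece can be forced into by choosing $\nu_0$ small; no delicate cancellation or sharp constant is needed beyond $\e^{-4+\pi/2}+\text{(small)}<\e^{-1}$.
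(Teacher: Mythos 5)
Your proposal is correct and follows essentially the same route as the paper: Duhamel on $[s,s+\tau^*]$, the semigroup decay $\e^{-\lambda_\nu\tau^*+\pi/2}$ for the linear part, Hölder in time combined with \ref{i:bootstrap1}--\ref{i:bootstrap2} and Lemma~\ref{l:linftyl2} for the three nonlinear terms, and smallness of $\nu\tau^*=4\nu/\lambda_\nu$ to absorb them. The only cosmetic difference is the constant used for the linear part (the paper bounds it by $\e^{-2}$ rather than $1/(2\e)$), and, as in the paper, the quadratic term $C(\nu\tau^*)^{1/8}\norm{\pn(s)}_{L^2}^2$ is absorbed using $\norm{\pn(s)}_{L^2}\leq 8\norm{\pn(0)}_{L^2}$, which is why $\nu_0$ depends on $\norm{\pn(0)}_{L^2}$.
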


\begin{proof}
We assume that $\nu_0$ is small enough so that Lemma \ref{lem:B2} holds.
By the definition of $\tau^*$, one has
\begin{align}
\norm{\cS_{\tau^*}(\pn(s))}_{L^2}\leq \frac{5}{\e^4}\norm{\pn(s)}_{L^2}\leq \frac{1}{\e^2}\norm{\pn(s)}_{L^2}\,.
\end{align}
Applying this inequality in~\eqref{e:duhamel} yields
\begin{align}
\norm{\pn(\tau^*+s)}_{L^2}&\leq \frac{1}{\e^2}\norm{\pn(s)}_{L^2}+C\nu\int_s^{\tau^*+s}\big(\norm{\pn}_{L^2}^{1/4}\norm{\Delta\pn}_{L^2}^{7/4}+\norm{\Delta\pn}_{L^2}+\norm{\pn}_{L^2}^{1/8}\norm{\Delta\pn}_{L^2}^{7/8}\norm{\pa^2_y\psi}_{L^2_y}\big)\,\dd \tau \nonumber\\
&\leq  \frac{1}{\e^2}\norm{\pn(s)}_{L^2}+C\Big(\nu\int_s^{\tau^*+s}\norm{\pn(\tau)}_{L^2}^2\,\dd \tau\Big)^{1/8}\Big(\nu\int_s^{\tau^*+s}\norm{\Delta\pn(\tau)}_{L^2}^2\,\dd \tau\Big)^{7/8}\nonumber\\
&\qquad +C\Big(\nu\int_s^{\tau^*+s}\norm{\Delta\pn(\tau)}_{L^2}^2\,\dd \tau\Big)^{1/2}(\nu\tau^*)^{1/2}\nonumber\\
&\qquad+C\Big(\nu\int_s^{\tau^*+s}\norm{\Delta\pn(\tau)}_{L^2}^2\,\dd \tau\Big)^{7/16}
\Big(\nu\int_s^{\tau^*+s}\norm{\pa^2_y\psi(\tau)}_{L^2}^2\,\dd \tau\Big)^{1/2}\Big(\nu\int_s^{\tau^*+s}\norm{\pn(\tau)}_{L^2}^2\,\dd \tau\Big)^{1/16}\,.
\end{align}
Using the bootstrap assumptions \ref{i:bootstrap1}-\ref{i:bootstrap2}   and Lemma~\ref{l:linftyl2}, it then follows that
\begin{align}\label{e:ldecaytmp}
\norm{\pn(\tau^*)}_{L^2}&\leq \frac{1}{\e^2}\norm{\pn(s)}_{L^2}+C(\nu\tau^*)^{1/8}\norm{\pn(s)}_{L^2}^2+C(\nu\tau^*)^{1/2}\norm{\pn(s)}_{L^2}+C\sqrt{C_1}(\nu\tau^*)^{1/16}\norm{\pn(s)}_{L^2}\nonumber\\
&\leq \frac{1}{\e^2}\norm{\pn(s)}_{L^2}+C(\nu\tau^*)^{1/16}\big(\norm{\pn(s)}_{L^2}+\sqrt{C_1}+1\big)\norm{\pn(s)}_{L^2}\,,
\end{align}
where we used the fact that $\nu\tau^*\ll 1$ for $\nu_0 $ sufficiently small. We further note that by the bootstrap assumption~\ref{i:bootstrap1}, it holds that $\norm{\pn(s)}_{L^2}\leq 8\norm{\pn(0)}_{L^2}$. Hence the lemma is proved by choosing
 \begin{align}\label{e:nu03}
 \frac{1}{\e^2} +C(4\nu_0\lambda_{\nu_0}^{-1})^{1/16}\big(8\norm{\pn(0)}_{L^2}+\sqrt{C_1}+1\big)\leq \frac{1}{\e}\,.
  \end{align}
 
\end{proof}

Finally, we are ready to prove that the bootstrap assumption~\ref{i:bootstrap1} can be refined.
\begin{lemma}\label{lem:B1}
Let the domain $\T^3=[0,L_1]\times[0, L_2]\times[0, L_3]$ satisfy $L_3< 2\pi$. Assume the bootstrap assumptions~\ref{i:bootstrap1} and~\ref{i:bootstrap2}. There exists $\nu_0=\nu_0(\norm{\pn(0)}_{L^2})$ such that for any $ 0\leq s\leq t\leq t_0$ and for any $\nu\leq \nu_0$,
 it holds that
\begin{align}
\norm{\pn(t)}_{L^2}\leq 4\e^{-\lambda_\nu (t-s)/4}\norm{\pn(s)}_{L^2}\,.
\end{align}
\end{lemma}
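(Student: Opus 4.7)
My plan is to combine Lemma~\ref{l:decaytau}, which supplies factor-$1/\e$ decay on each window of length $\tau^* = 4/\lambda_\nu$, with a short-time stability estimate obtained from Duhamel's formula, and then tune $\nu_0$ so that the resulting constants fit under the target factor $4$.

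Fix $0\le s\le t\le t_0$ and write $t-s=m\tau^*+r$ with $m\in\N$ and $r\in[0,\tau^*)$. First I iterate Lemma~\ref{l:decaytau} on the consecutive intervals $[s+k\tau^*, s+(k+1)\tau^*]$ for $k=0,\ldots,m-1$, which gives
\[
\norm{\pn(s+m\tau^*)}_{L^2}\le \e^{-m}\norm{\pn(s)}_{L^2}.
\]
On the residual short interval $[s+m\tau^*,t]$, I run the Duhamel formula \eqref{eq:pnMild} starting from $s_*:=s+m\tau^*$ and estimate the nonlinear contributions by exactly the same Gagliardo–Nirenberg/Hölder chain used to derive \eqref{e:ldecaytmp}, but with $\tau^*$ replaced by $r$. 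The key observation is that $H_\nu$ is $m$-accretive on $L^2$, so $\cS_r$ is a contraction and $\norm{\cS_r\pn(s_*)}_{L^2}\le \norm{\pn(s_*)}_{L^2}$; this is crucial because the semigroup bound $\e^{\pi/2-\lambda_\nu r}$ from Corollary~\ref{c:decay} overshoots when $r$ is small, and would force a constant larger than $4$ below. Using the bootstrap hypotheses \ref{i:bootstrap1}-\ref{i:bootstrap2}, Lemma~\ref{l:linftyl2}, and the uniform bound $\norm{\pn(s_*)}_{L^2}\le 8\norm{\pn(0)}_{L^2}$ from \ref{i:bootstrap1}, the Duhamel computation yields
\[
\norm{\pn(t)}_{L^2}\le \bigl[1+C(\nu\tau^*)^{1/16}\bigl(\norm{\pn(0)}_{L^2}+\sqrt{C_1}+1\bigr)\bigr]\norm{\pn(s_*)}_{L^2}.
\]
Writing $D=D(\norm{\pn(0)}_{L^2},C_1)$ and combining with the iterated decay gives
\[
\norm{\pn(t)}_{L^2}\le \bigl[1+C(\nu\tau^*)^{1/16}D\bigr]\e^{-m}\norm{\pn(s)}_{L^2}.
\]

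Since $m\ge \lambda_\nu(t-s)/4-1$, we have $\e^{-m}\le \e\cdot\e^{-\lambda_\nu(t-s)/4}$. I therefore shrink $\nu_0$ (beyond all requirements imposed in Lemma~\ref{lem:B2} and Lemma~\ref{l:decaytau}) so that
\[
1+C(\nu_0\tau^*(\nu_0))^{1/16}D\le 4/\e,
\]
which is possible because $\nu\tau^*=4\nu/\lambda_\nu\to 0$ as $\nu\to0$. For any $\nu\le\nu_0$ this yields
\[
\norm{\pn(t)}_{L^2}\le \tfrac{4}{\e}\cdot\e\cdot\e^{-\lambda_\nu(t-s)/4}\norm{\pn(s)}_{L^2}=4\,\e^{-\lambda_\nu(t-s)/4}\norm{\pn(s)}_{L^2},
\]
which is \ref{i:bootstrap3}. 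The main delicate point is tightening the bootstrap constant from $8$ down to $4$; this is possible only because the identity $\cS_0$ has norm $1$ rather than $\e^{\pi/2}$, so that the factor $\e$ lost from rounding $(t-s)/\tau^*$ down to the integer $m$ is exactly balanced by the $1/\e$ contraction supplied by Lemma~\ref{l:decaytau}, leaving the small Duhamel correction the only obstacle—and that is killed by the choice of $\nu_0$.
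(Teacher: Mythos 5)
Your proof is correct and follows essentially the same strategy as the paper's: iterate Lemma~\ref{l:decaytau} over windows of length $\tau^*=4/\lambda_\nu$, control the growth on the leftover interval of length $r<\tau^*$, and absorb the factor $\e$ lost in rounding $(t-s)/\tau^*$ down to an integer. The only substantive difference is the leftover interval: the paper derives $\ddt\norm{\pn}_{L^2}^2\le \nu C_2\norm{\pn}_{L^2}^2$ from the energy estimate \eqref{e:energypn} together with Lemma~\ref{l:linftyl2} and \ref{i:bootstrap1}, and concludes a factor $\sqrt 2$ once $C_2\nu\tau^*\le\ln 2$ (so its final constant is $\sqrt2\,\e<4$), whereas you rerun Duhamel and invoke the contractivity $\norm{\cS_r}\le 1$ --- a valid consequence of $\Re\langle H_\nu g,g\rangle\ge 0$, and indeed necessary in your version, since the prefactor $\e^{\pi/2}$ from Corollary~\ref{c:decay} would overshoot $4$ on a short interval. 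Both routes produce a short-time multiplicative factor tending to $1$ as $\nu\to 0$, so your choice of $\nu_0$ closes the argument just as the paper's does.
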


\begin{proof}
We take $\nu_0 =\nu_0(\norm{\pn(0)}_{L^2})$ so that  Lemma  \ref{lem:B2}-\ref{l:decaytau} apply.
By Lemma~\ref{l:decaytau}, one has
\begin{align}
\norm{\pn(s+n\tau^*)}_{L^2}\leq \e^{-n}\norm{\pn(s)}_{L^2}\leq \norm{\pn(s)}_{L^2}\,,\quad\text{for any $n\in \ZZ_+$.}
\end{align}
For any $t\in [s, t_0]$, there exists $n$ such that $t\in[s+n\tau^*,~s+(n+1)\tau^*)$. Recalling Lemma~\ref{l:linftyl2}, the energy estimate~\eqref{e:energypn}, and the bootstrap assumption \ref{i:bootstrap1}, then for some positive
$C_2=C_2(\norm{\pn(0)}_{L^2},\norm{\psi(0)}_{L^2_y})$ it holds that
\begin{align}
\ddt\norm{\pn}_{L^2}^2&\leq C\nu\norm{\pn}_{L^2}^{10}+CC_1^{4/3} \nu \norm{\pn}_{L^2}^2\notag\\
&\leq C(\norm{\pn(0)}_{L^2}^8+C_1^{4/3})\nu \norm{\phi_{\neq}}_{L^2}^2\notag\\
&\leq \nu C_2\norm{\pn}_{L^2}^2\,.\label{e:stima}
\end{align}
By choosing 
\begin{align}
C_2\nu\tau^*\leq \ln 2\,,
\end{align}
which is equivalent to asking for $\nu\leq \nu_0$ and
\begin{align}
\frac{\nu_0}{\lambda_{\nu_0}}\leq \frac{\ln 2}{4C_2}\,, 
\end{align}
one has for all $t_1\in[0,t_0]$ and $\tau\in [t_1, t_1+\tau^*]\cap [0,t_0]$,
 \begin{align}
\norm{\pn(\tau)}_{L^2}\leq \sqrt 2\norm{\pn(t_1)}_{L^2}\,.
\end{align}
It then follows that
\begin{align*}
\norm{\pn(t)}_{L^2}\leq \sqrt 2 \norm{\pn(s+n\tau^*)}_{L^2}\leq \sqrt 2 \e^{-n}\norm{\pn(s)}_{L^2}\leq \sqrt 2 \e^{1-(t-s)/\tau^*}\norm{\pn(s)}_{L^2}\\
\leq 4\e^{-\lambda_\nu (t-s)/4}\norm{\pn(s)}_{L^2}\,.
\end{align*}
\end{proof}

\section{Global existence of the 3D Keller-Segel equation with planar helical flow}\label{s:KSegel}
We prove the global  existence of the classical solution of the solution of the three dimensional advective Keller-Segel equation~\eqref{e:KSegel} with any initial data.
According to the regularity criterion of solution (see~\cite{Kiselev.2016}), we only  need to get the global $L^2$ estimate of solution.
 
Let $\rho$ be the solution of the three dimensional Keller-Segel equation~\eqref{e:KSegel}. Recall the decomposition of functions defined in~\eqref{e:decompose}, the Keller-Segel equation can be decomposed as
\begin{equation}\label{e:rz}
\begin{cases}
\partial_t\rz-\nu\partial_{yy}\rz+\nu\partial_y\big(\rz \partial_y \cz\big)+\nu\langle\nabla\cdot(\rho_{\neq} \nabla c_{\neq} )\rangle=0,\\
-\partial_{yy} \cz=\rz-\bar\rho\,,
\end{cases}
\end{equation}
and
\begin{equation}\label{e:rn}
\begin{cases}
\begin{aligned}
\partial_t\rho_{\neq}+\sin y \partial_{x_1}\rho_{\neq}&+\cos y \partial_{x_2}\rho_{\neq}-\nu\Delta\rho_{\neq}+\nu\nabla\rz\cdot\nabla c_{\neq}+\nu\nabla\rho_{\neq}\cdot\nabla \cz\\
&+\nu\left(\nabla\cdot(\rho_{\neq} \nabla c_{\neq})\right)_{\neq}-\nu \rz\rho_{\neq}-\nu \rho_{\neq}(\rz-\overline{\rho})=0,\\
-\Delta c_{\neq}=\rho_{\neq}\,.
\end{aligned}
\end{cases}
\end{equation}

We again establish the global well-posedness based on the bootstrap argument. Inspired by~\cite{Bedrossian.2017}, we list the bootstrap assumptions as below:
\begin{assumption}\label{ass:KSegelBOOT}
Let $ C_{L^2}, C_{\dot{H}^1}$ and $C_\infty$ be the positive constants which are independent of $\nu$ and $t$. From the standard energy estimate, the following estimates hold at least within a short time of order $O(1/\lambda_\nu)$. Here we denote $t_0$ as the maximum time such that the following estimates hold.
\begin{itemize}
  \item [(A-1)] Nonzero mode $L^2\dot{H}^1$ estimate: for any $0\leq s\leq t\leq t_0$
  \begin{equation}\label{eq:L2H1}
  \nu\int_{s}^{t}\|\nabla \rho_{\neq}\|^2_{L^2}d\tau\leq 128 e^{-\lambda_\nu s/4}\|\rho_{0}\|^2_{L^2},
  \end{equation}
  \item [(A-2)]Nonzero mode enhanced dissipation estimate: for any $t\in [0,t_0]$
   \begin{equation}\label{eq:L2}
  \| \rho_{\neq}(t)\|_{L^2}^2\leq 32 e^{-\lambda_\nu t/4}\|\rho_{0}\|_{L^2}^2,
  \end{equation}
  \item [(A-3)] Uniform in time $L^\infty L^2_y$ estimate on the zero mode
  \begin{equation}\label{eq:LinfL2}
  \| \rz-\overline{\rho}\|_{L^\infty(0, t_0;L_y^2)}\leq 4C_{L^2},
  \end{equation}
  \item [(A-4)]Uniform in time $L^\infty \dot{H}^1_y$ estimate on the zero mode
  \begin{equation}\label{eq:LinfH1}
  \|\partial_y \rz\|_{L^\infty(0, t_0;L_y^2)}\leq 4C_{\dot{H}^1},
  \end{equation}
 \item [(A-5)] $L^\infty$ estimate of solution
 \begin{equation}\label{eq:Linf}
  \|\rho\|_{L^\infty(0, t_0;L^\infty)}\leq 4C_\infty.
  \end{equation}
\end{itemize}
\end{assumption}

We aim to show $t_0=\infty$. This is achieved through the bootstrap argument. To be specific, we will prove the following refined estimates hold on $[0, t_0]$ by choosing proper $\nu$ .
\begin{itemize}
  \item [(B-1)] Nonzero mode $L^2\dot{H}^1$ estimate: for any $0\leq s\leq t\leq t_0$
  \begin{equation}\label{eq:L2H1new}
  \nu\int_{s}^{t}\|\nabla \rho_{\neq}\|^2_{L^2}d\tau\leq 64 e^{-\lambda_\nu s/4}\|\rho_{0}\|^2_{L^2},
  \end{equation}
  \item [(B-2)]Nonzero mode enhanced dissipation estimate: for any $t\in [0,t_0]$
   \begin{equation}\label{eq:L2new}
  \| \rho_{\neq}(t)\|_{L^2}^2\leq 16 e^{-\lambda_\nu t/4}\|\rho_{0}\|_{L^2}^2,
  \end{equation}
  \item [(B-3)] Uniform in time $L^\infty L^2_y$ estimate on the zero mode
  \begin{equation}\label{eq:LinfL2new}
  \| \rz-\overline{\rho}\|_{L^\infty(0, t_0;L_y^2)}\leq 2C_{L^2},
  \end{equation}
  \item [(B-4)]Uniform in time $L^\infty \dot{H}^1_y$ estimate on the zero mode
  \begin{equation}\label{eq:LinfH1new}
  \|\partial_y \rz\|_{L^\infty(0, t_0;L_y^2)}\leq 2C_{\dot{H}^1},
  \end{equation}
 \item [(B-5)] $L^\infty$ estimate of solution
 \begin{equation}\label{eq:Linfnew}
  \|\rho\|_{L^\infty(0, t_0;L^\infty)}\leq 2C_\infty.
  \end{equation}
\end{itemize}
Once the estimates (B-1)$-$(B-5) are established, we obtain $t_0=\infty$. The global existence of the three dimensional Keller-Segel thus follows directly. Hence to prove Theorem~\ref{t:KSegel}, we only need to prove the refined estimates (B-1)$-$(B-5).

One well known fact is that the $L^1$ norm of the density is conserved. For the notational convenience,  we define
\begin{equation}\label{e:L1}
M:=\norm{\rho}_{L^1}=\norm{\rho_0}_{L^1}\,.
\end{equation}

First, we will show that the estimate~\eqref{eq:L2H1} can be refined by choosing $\nu_0$ small enough.
\begin{lemma}[ $L^2\dot{H}^1$ estimate of $\rho_{\neq}$ ]\label{lem:L2H1}
Assume Assumption~\eqref{ass:KSegelBOOT} holds. Let $\rz$ and $\rho_{\neq}$ be the solutions of equations (\ref{e:rz}) and (\ref{e:rn}).  There exists $\nu_0=\nu(\rho_0)$, such that for any $\nu<\nu_0$, $0\leq s\leq t\leq t_0$, one has
\begin{equation}
\nu\int_{s}^{t}\|\nabla \rho_{\neq}\|^2_{L^2}d\tau\leq 64 e^{-\lambda_\nu s/4}\|\rho_{0}\|^2_{L^2}.
\end{equation}
\end{lemma}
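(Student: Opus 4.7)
The plan is to run a standard $L^2$ energy estimate on equation~\eqref{e:rn}, integrate in time from $s$ to $t$, and use the bootstrap hypotheses (A-2)--(A-5) together with the elliptic estimates for $-\Delta c_{\neq}=\rho_{\neq}$ and $-\partial_{yy}\cz=\rz-\bar\rho$ to show that every nonlinear contribution is small once $\nu/\lambda_\nu$ is small.

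First I would multiply~\eqref{e:rn} by $\rho_{\neq}$ and integrate over $\T^3$. The transport term drops by incompressibility, so one obtains
$$
\frac{1}{2}\frac{d}{dt}\|\rho_{\neq}\|_{L^2}^2+\nu\|\nabla\rho_{\neq}\|_{L^2}^2=\mathcal{R},
$$
with $\mathcal{R}$ collecting the five cubic/interaction terms. Using $-\Delta c_{\neq}=\rho_{\neq}$, the Keller--Segel piece $\nu\int\rho_{\neq}(\nabla\cdot(\rho_{\neq}\nabla c_{\neq}))_{\neq}$ reduces after integration by parts (the $\neq$ projector being harmless against the mean-free $\rho_{\neq}$) to $-\tfrac{\nu}{2}\int\rho_{\neq}^3$; and $\nu\int\rho_{\neq}\nabla\rho_{\neq}\cdot\nabla\cz$ becomes (using $-\partial_{yy}\cz=\rz-\bar\rho$) a cubic in $\rho_{\neq}$ multiplied by $\rz-\bar\rho$. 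All such cubic pieces, together with $\nu\int\rz\rho_{\neq}^2$ and $\nu\int\rho_{\neq}^2(\rz-\bar\rho)$, I would control via the bootstrap hypothesis (A-5), yielding contributions of size $C C_\infty\nu\|\rho_{\neq}\|_{L^2}^2$.

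The remaining delicate term is the cross interaction $-\nu\int\partial_y\rz\,\partial_y c_{\neq}\,\rho_{\neq}\,dx$. Here I would use the bootstrap bound $\|\partial_y\rz\|_{L^2_y}\leq 4C_{\dot H^1}$ from (A-4), the elliptic gain $\|\nabla c_{\neq}\|_{L^6}\leq C\|\rho_{\neq}\|_{L^2}$ from $-\Delta c_{\neq}=\rho_{\neq}$ on $\T^3$, and a mixed H\"older/Gagliardo--Nirenberg estimate to arrive at
$$
\left|\nu\int\partial_y\rz\,\partial_y c_{\neq}\,\rho_{\neq}\right|\leq C\nu\,C_{\dot H^1}\|\rho_{\neq}\|_{L^2}^{3/2}\|\nabla\rho_{\neq}\|_{L^2}^{1/2},
$$
after which Young's inequality absorbs $\tfrac{1}{2}\nu\|\nabla\rho_{\neq}\|_{L^2}^2$ into the dissipation on the left, leaving only $C(C_{\dot H^1},C_\infty)\nu\|\rho_{\neq}\|_{L^2}^2$ on the right.

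Integrating from $s$ to $t$ I would then obtain
$$
\nu\int_s^t\|\nabla\rho_{\neq}\|_{L^2}^2\,d\tau\leq\|\rho_{\neq}(s)\|_{L^2}^2+C(C_{\dot H^1},C_\infty)\,\nu\int_s^t\|\rho_{\neq}\|_{L^2}^2\,d\tau,
$$
and then apply (A-2) to convert the last integral into $(128\,\nu/\lambda_\nu)e^{-\lambda_\nu s/4}\|\rho_0\|_{L^2}^2$. Since $\nu/\lambda_\nu\to 0$ by Corollary~\ref{c:decay}, I can select $\nu_0=\nu_0(\rho_0,C_\infty,C_{\dot H^1})$ so that the final term is bounded by $32 e^{-\lambda_\nu s/4}\|\rho_0\|_{L^2}^2$; combined with $\|\rho_{\neq}(s)\|_{L^2}^2\leq 32 e^{-\lambda_\nu s/4}\|\rho_0\|_{L^2}^2$ from (A-2), this yields the refined estimate $64 e^{-\lambda_\nu s/4}\|\rho_0\|_{L^2}^2$. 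The principal obstacle is the cross term $\int\partial_y\rz\,\partial_y c_{\neq}\,\rho_{\neq}$: keeping the bound sharp enough that only a fraction of $\nu\|\nabla\rho_{\neq}\|_{L^2}^2$ is generated (as opposed to a large multiple that would obstruct absorption) requires carefully coordinating the elliptic gain on $c_{\neq}$, the $\dot H^1_y$ control on $\rz$, and the three-dimensional interpolation.
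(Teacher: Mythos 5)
Your proposal is correct and follows essentially the same route as the paper: an $L^2$ energy estimate on the $\rho_{\neq}$ equation, control of each nonlinear term via H\"older/Gagliardo--Nirenberg and the bootstrap bounds (A-2)--(A-5), absorption of the gradient factors by Young's inequality, and time integration using (A-2) together with the smallness of $\nu/\lambda_\nu$. The only deviations are cosmetic choices of exponents (e.g.\ you bound $\int\rho_{\neq}^3$ by the $L^\infty$ hypothesis and use $L^6$--$L^3$ H\"older on the cross term, where the paper uses $\|\rho_{\neq}\|_{L^3}^3\le C\|\rho_{\neq}\|_{L^2}^{3/2}\|\nabla\rho_{\neq}\|_{L^2}^{3/2}$ and an $L^4$--$L^4$ splitting), and these do not change the argument.
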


\begin{proof}
Multiplying both sides of (\ref{e:rz}) by $\rho_{\neq}$ and integrating over $\mathbb{T}^3$, one obtains
\begin{align*}
\nonumber
\frac{1}{2}\frac{d}{dt}\|\rho_{\neq}\|^2_{L^2}+\nu\|\nabla\rho_{\neq}\|^2_{L^2}&=-\nu\int_{\mathbb{T}^3}\nabla\rz\cdot\nabla c_{\neq}\rho_{\neq}dx_1dx_2dy-\nu\int_{\mathbb{T}^3}\nabla\rho_{\neq}\cdot\nabla \cz\rho_{\neq}dx_1dx_2dy\\
\nonumber
&\quad-\nu\int_{\mathbb{T}^3}\left(\nabla\cdot(\rho_{\neq} \nabla c_{\neq})\right)_{\neq}\rho_{\neq}dx_1dx_2dy+\nu \int_{\mathbb{T}^3}\rz\rho_{\neq}\rho_{\neq}dx_1dx_2dy\\
&\quad+\nu \int_{\mathbb{T}^3}\rho_{\neq}(\rz-\overline{\rho})\rho_{\neq}dx_1dx_2dy\,.
\end{align*}
Treating the right hand side with H\"{o}lder's inequality and integrating by parts, one has
\begin{align}\label{e:rzenergy}
\nonumber
\frac{1}{2}\frac{d}{dt}\|\rho_{\neq}\|^2_{L^2}+\nu\|\nabla\rho_{\neq}\|^2_{L^2}&\leq C\nu \norm{\de_y\rz}_{L^2_y}\norm{\nabla\cn}_{L^4}\norm{\rn}_{L^4}+C\nu\norm{\rz-\bar \rho}_{L^\infty_y}\norm{\rn}_{L^2}^2+C\nu\norm{\rn}_{L^3}^3\\
&\quad+C\nu \norm{\nabla \rn}_{L^2}\norm{\nabla \cn}_{L^4}\norm{\rn}_{L^4}+C\nu\norm{\rz}_{L^\infty_y}\norm{\rn}_{L^2}^2\,.
\end{align}
Recall the Gagliardo-Nirenberg inequalities:
\begin{equation}\label{e:sobolev}
\begin{aligned}
\norm{\nabla \cn}_{L^4}&\leq C\norm{\Delta\cn}_{L^2}^{7/8}\norm{\cn}_{L^2}^{1/8}\leq C\norm{\Delta \cn}_{L^2}\,,\\
\norm{\rn}_{L^4}&\leq C\norm{ \rn}_{L^2}^{1/4}\norm{\nabla\rn}_{L^2}^{3/4}\,,\\
\norm{\rn}_{L^3}&\leq C\norm{\rn}_{L^2}^{1/2}\norm{\nabla\rn}_{L^2}^{1/2}\,.
\end{aligned}
\end{equation}
Applying these inequalities in~\eqref{e:rzenergy}, we obtain
\begin{align*}
\frac{1}{2}\frac{d}{dt}\|\rho_{\neq}\|^2_{L^2}+\nu\|\nabla\rho_{\neq}\|^2_{L^2}&\leq C\nu \norm{\de_y\rz}_{L^2_y}\norm{\rn}_{L^2}^{5/4}\norm{\nabla\rn}_{L^2}^{3/4}+C\nu\norm{\rz-\bar \rho}_{L^\infty_y}\norm{\rn}_{L^2}^2\\
&\quad+C\nu\norm{\rn}_{L^2}^{3/2}\norm{\nabla\rn}_{L^2}^{3/2}+C\nu\norm{\rn}_{L^2}^{5/4}\norm{\nabla\rn}_{L^2}^{7/4}+C\nu\norm{\rz}_{L^\infty_y}\norm{\rn}_{L^2}^2\,.
\end{align*}
We then use Young's inequality to get
\begin{align}\label{e:rnenergy1}
\nonumber
\frac{d}{dt}\|\rho_{\neq}\|^2_{L^2}+\nu\|\nabla\rho_{\neq}\|^2_{L^2}&\leq C\nu\Big(\norm{\de_y\rz}_{L^2_y}^{8/5}+\norm{\rz-\bar \rho}_{L^\infty_y}+\norm{\rz}_{L^\infty_y} \Big)\norm{\rn}_{L^2}^2\\
&\quad+C\nu\norm{\rn}_{L^2}^6+C\nu\norm{\rn}_{L^2}^{10}\,.
\end{align}
According to the Assumption (\ref{ass:KSegelBOOT}), we deduce that for any $t\in[0, t_0]$, one has
\begin{align}\label{e:rnenergy2}
\frac{d}{dt}\|\rho_{\neq}\|^2_{L^2}+\nu\|\nabla\rho_{\neq}\|^2_{L^2}&\leq
C\nu\Big( C_{\dot H^1}^{8/5}+C_{\infty}+\norm{\rho_0}_{L^2}^4+\norm{\rho_0}_{L^2}^8\Big)\norm{\rn}_{L^2}^2\,.
\end{align}
For any $0\leq s\leq t\leq t_0$, integrating from $s$ to $t$, equation~\eqref{e:rnenergy2} becomes
\begin{equation*}
\begin{aligned}
\|\rho_{\neq}(t)\|^2_{L^2}&+\nu\int_{s}^{t}\|\nabla\rho_{\neq}\|^2_{L^2}dt\leq \|\rho_{\neq}(s)\|^2_{L^2}+\frac{\nu}{\lambda_\nu}C\Big( C_{\dot H^1}^{8/5}+C_{\infty}+\norm{\rho_0}_{L^2}^4+\norm{\rho_0}_{L^2}^8\Big)e^{-\lambda_\nu s/4}\norm{\rho_0}_{L^2}^2\\
&\leq 32 e^{-\lambda_\nu s/4}\norm{\rho_0}_{L^2}^2+\frac{\nu}{\lambda_\nu}C\Big( C_{\dot H^1}^{8/5}+C_{\infty}+\norm{\rho_0}_{L^2}^4+\norm{\rho_0}_{L^2}^8\Big)e^{-\lambda_\nu s/4}\norm{\rho_0}_{L^2}^2\,.
\end{aligned}
\end{equation*}
By choosing $\nu_0$ satisfying
\begin{align*}
\frac{\nu_0}{\lambda_{\nu_0}}\leq \frac{32}{C\Big( C_{\dot H^1}^{8/5}+C_{\infty}+\norm{\rho_0}_{L^2}^4+\norm{\rho_0}_{L^2}^8\Big)}\,,
\end{align*}
we get the desired result.
\end{proof}

We then establish the refined estimate of $\norm{\rz-\bar \rho}_{L^2_y}$ via the standard estimate.

\begin{lemma}[ $L^\infty L_y^2$ estimate of $\rz$ ]\label{lem:LinfL2}
Let Assumption \ref{ass:KSegelBOOT} hold and $\rz$ and $\rn$ be the solutions of equations (\ref{e:rz}) and (\ref{e:rn}).  There exists $\nu_0=\nu(\rho_0)$, such that for any $\nu<\nu_0$, one has
\begin{equation}\label{eq:4.31}
\| \rz-\overline{\rho}\|_{L^\infty(0, t_0;L_y^2})\leq 2C_{L^2}.
\end{equation}
\end{lemma}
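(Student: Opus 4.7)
The plan is to derive a one-dimensional $L^2_y$ energy estimate for $\tilde\rho := \rz-\bar\rho$ and bound the right-hand side using the bootstrap assumptions. First, multiply the first equation in \eqref{e:rz} by $\tilde\rho$ and integrate over $\T^1$. Because the $\T^2$-averages of $\partial_{x_1}(\rn\partial_{x_1}\cn)$ and $\partial_{x_2}(\rn\partial_{x_2}\cn)$ vanish, one has $\langle\nabla\cdot(\rn\nabla\cn)\rangle=\partial_y\langle\rn\,\partial_y\cn\rangle$; then integration by parts, together with the Poisson identity $-\partial_{yy}\cz=\tilde\rho$ and the fact that $\int_{\T^1}\tilde\rho\,dy=0$, yields
\[
\tfrac{1}{2}\ddt\norm{\tilde\rho}_{L^2_y}^2+\nu\norm{\partial_y\tilde\rho}_{L^2_y}^2=\tfrac{\nu}{2}\int_{\T^1}\tilde\rho^3\,dy+\nu\bar\rho\,\norm{\tilde\rho}_{L^2_y}^2+\nu\int_{\T^1}\langle\rn\,\partial_y\cn\rangle\,\partial_y\tilde\rho\,dy.
\]

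Next, I would handle the self-interaction via the bootstrap $L^\infty$ assumption (A-5): since $\norm{\rho}_{L^\infty}\le 4C_\infty$ gives $|\tilde\rho|\le 4C_\infty+\bar\rho$ pointwise, one obtains $\tfrac{\nu}{2}|\int\tilde\rho^3|\le \nu K\norm{\tilde\rho}^2$ with $K=K(C_\infty,\bar\rho)$ independent of $\nu$ and $t$, which combines with $\nu\bar\rho\norm{\tilde\rho}^2$ into a single $\nu K'\norm{\tilde\rho}^2$ contribution. For the coupling term, Cauchy--Schwarz in $(x_1,x_2)$ gives $\norm{\langle f\rangle}_{L^2_y}\le (L_1L_2)^{-1/2}\norm{f}_{L^2(\T^3)}$; combining Gagliardo--Nirenberg $\norm{\rn}_{L^4}\le C\norm{\rn}_{L^2}^{1/4}\norm{\nabla\rn}_{L^2}^{3/4}$, the three-dimensional embedding $H^1\hookrightarrow L^4$, and elliptic regularity $\norm{\cn}_{H^2}\le C\norm{\rn}_{L^2}$ produces
\[
\norm{\langle\rn\,\partial_y\cn\rangle}_{L^2_y}^2\le C\norm{\rn}_{L^2}^{5/2}\norm{\nabla\rn}_{L^2}^{3/2}.
\]
Young's inequality then absorbs $\tfrac{\nu}{2}\norm{\partial_y\tilde\rho}^2$ into the left-hand side and leaves the differential inequality
\[
\ddt\norm{\tilde\rho}^2+\nu\norm{\partial_y\tilde\rho}^2\le 2\nu K'\norm{\tilde\rho}^2+C\nu\norm{\rn}^{5/2}\norm{\nabla\rn}^{3/2}.
\]

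To close the estimate, invoke Poincar\'e, $\nu\norm{\partial_y\tilde\rho}^2\ge\nu(2\pi/L_3)^2\norm{\tilde\rho}^2$, integrate in time, and apply H\"older together with the bootstrap bounds (A-1)--(A-2) to control the coupling integral,
\[
\nu\int_0^t\norm{\rn}^{5/2}\norm{\nabla\rn}^{3/2}\,d\tau\le C\nu\left(\int_0^t\norm{\rn}^{10}\,d\tau\right)^{1/4}\left(\int_0^t\norm{\nabla\rn}^2\,d\tau\right)^{3/4}\le C(\rho_0)\,\nu^{1/4}\lambda_\nu^{-1/4},
\]
which tends to zero as $\nu\to 0$ by Proposition~\eqref{p:derate} and Example~\ref{exam:2.1}. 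The main obstacle is the self-interaction term $\nu K'\norm{\tilde\rho}^2$: since the enhanced dissipation mechanism does \emph{not} act on the zero mode, the only source of dissipation is the standard Poincar\'e bound with rate $\nu(2\pi/L_3)^2$. Absorbing $\nu K'\norm{\tilde\rho}^2$ into $\nu(2\pi/L_3)^2\norm{\tilde\rho}^2$ requires consistency between the bootstrap constants $C_{L^2},C_\infty,C_{\dot H^1}$ and the torus size, together with the choice $\nu_0=\nu_0(\rho_0,L_1,L_2,L_3)$ small enough so that the coupling error $C(\rho_0)\nu_0^{1/4}\lambda_{\nu_0}^{-1/4}$ is dominated by $3C_{L^2}^2$; a Gr\"onwall step then upgrades (A-3) to the refined bound $\norm{\tilde\rho(t)}_{L^2_y}\le 2C_{L^2}$ on $[0,t_0]$.
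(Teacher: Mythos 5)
Your energy identity and the treatment of the coupling term $\nu\int\langle\rn\,\partial_y\cn\rangle\,\partial_y\tilde\rho\,dy$ are correct and essentially match the paper's computation. The gap is in the closing step. You reduce the problem to absorbing a linear growth term $\nu K'\norm{\tilde\rho}_{L^2_y}^2$, with $K'$ depending on $C_\infty$ and $\bar\rho$, into the Poincar\'e dissipation $\nu(2\pi/L_3)^2\norm{\tilde\rho}_{L^2_y}^2$, and you concede this "requires consistency between the bootstrap constants and the torus size." That consistency cannot be arranged: Theorem~\ref{t:KSegel} makes no smallness assumption on the data or the torus, so $\bar\rho=M/(L_1L_2L_3)$ and $C_\infty$ can be arbitrarily large compared with $(2\pi/L_3)^2$, and the absorption fails. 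There is also a circularity: your $K'$ depends on $C_\infty$, but $C_\infty$ is itself produced from $C_{L^2}$ by the Moser iteration in Lemma~\ref{lem:LinfLinf}, so the constant $C_{L^2}$ you are trying to establish would have to be chosen after a constant that depends on it.

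The paper closes the estimate differently, and this is the idea your proposal is missing: instead of the linear Poincar\'e bound, it uses the one-dimensional Nash/Gagliardo--Nirenberg inequality together with conservation of mass,
\begin{align*}
-\norm{\partial_y\rz}_{L^2_y}^2\;\leq\;-\frac{\norm{\rz-\bar\rho}_{L^2_y}^6}{C\norm{\rz-\bar\rho}_{L^1_y}^4}\;\leq\;-\frac{\norm{\rz-\bar\rho}_{L^2_y}^6}{CM^4}\,,
\end{align*}
which turns the dissipation into a \emph{superlinear} damping $-C\nu M^{-4}\norm{\tilde\rho}^6$. This dominates the bad term $CM^2\nu\norm{\tilde\rho}^2$ as soon as $\norm{\tilde\rho}_{L^2_y}^4\gtrsim M^6$, so the ODE comparison yields a bound $C_{L^2}^2= CM^3+CM^2+\norm{\rho_0}_{L^2}^2$ depending only on the conserved mass and the initial data, with no smallness condition and no reference to $C_\infty$. (The paper also keeps the coefficient of the linear term equal to $CM^2$ rather than $K'(C_\infty)$ by estimating $\int(\rz-\bar\rho)\partial_y\cz\,\partial_y\rz\,dy$ via $\norm{\partial_y\cz}_{L^\infty_y}\leq C\norm{\rz-\bar\rho}_{L^1_y}\leq CM$ instead of invoking (A-5), which is what removes the circularity.) Your remaining steps — the H\"older-in-time bound $C(\rho_0)\nu^{1/4}\lambda_\nu^{-1/4}$ on the forcing from the nonzero modes and the choice of $\nu_0$ — are fine and parallel the paper's definition of $G(t)$.
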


\begin{proof}
Multiplying both sides of (\ref{e:rz}) by $\rz-\bar{\rho}$, integrating over $\mathbb{T}$, using H\"older's inequality and integration by parts, one has
\begin{equation*}
\begin{aligned}
\frac{1}{2}\frac{d}{dt}\|\rz-\overline{\rho}\|^2_{L^2_y}+\nu\|\partial_y \rz\|^2_{L^2_y}&=-\nu \int_{\T}\partial_y(\rz \partial_y \cz)(\rz-\overline{\rho})dy-\nu\int_{\T}\langle\nabla\cdot(\rho_{\neq} \nabla c_{\neq})\rangle(\rz-\overline{\rho})dy\\
&=\nu\int_\T(\rz-\bar \rho)\de_y\cz\de_y\rz\,dy+\nu\int_\T\bar \rho\de_y\cz\de_y\rz\,dy-\overline{\rho})dy\\
&\quad-\nu\int_{\T}\langle\nabla\cdot(\rho_{\neq} \nabla c_{\neq})\rangle(\rz-\overline{\rho})dy\\
&\leq \nu\norm{\rz-\bar\rho}_{L^2_y}\norm{\de_y\cz}_{L^\infty_y}\norm{\de_y\rz}_{L^2_y}+CM\nu\norm{\de_y\cz}_{L^2_y}\norm{\de_y\rz}_{L^2_y}\\
&\quad+C\nu\norm{\rn}_{L^2}\norm{\nabla \cn}_{L^\infty}\norm{\de_y\rz}_{L^2}\,,
\end{aligned}
\end{equation*}
where we used the fact $\bar\rho\leq CM$ in the last inequality. We next apply the following Gagliardo-Nirenberg and Poincar\'e inequalities 
\begin{align*}
\norm{\de_y\cz}_{L^\infty_y}&\leq C\norm{\de_{yy}\cz}_{L^1_y}\leq C\norm{\rz-\bar \rho}_{L^1_y}\leq CM\,,\\
\norm{\de_y\cz}_{L^2_y}&\leq C\norm{\de_{yy}\cz}_{L^2_y}\leq C\norm{\rz-\bar\rho}_{L^2_y}\,,\\
\norm{\nabla\cn}_{L^\infty}&\leq C\norm{\nabla^2 \cn}_{L^4}= C\norm{\rn}_{L^4}\leq C\norm{\rn}_{L^2}^{1/4}\norm{\nabla\rn}_{L^2}^{3/4}\,,
\end{align*}
to get
\begin{align*}
\frac{1}{2}\frac{d}{dt}\|\rz-\overline{\rho}\|^2_{L^2_y}+\nu\|\partial_y \rz\|^2_{L^2_y}&\leq CM\nu\norm{\rz-\bar \rho}_{L^2_y}\norm{\de_y\rz}_{L^2_y}+C\nu\norm{\rn}_{L^2}^{5/4}\norm{\nabla\rn}_{L^2}^{3/4}\norm{\de_y\rz}_{L^2}\,.
\end{align*}
Then Young's inequality implies
\begin{align}\label{e:rzenergy}
\frac{d}{dt}\|\rz-\overline{\rho}\|^2_{L^2_y}\leq -\nu\|\partial_y \rz\|^2_{L^2_y}+ CM^2\nu\norm{\rz-\bar \rho}_{L^2_y}^2+C\nu\norm{\rn}_{L^2}^{10}+\frac{\nu}{256}\norm{\nabla\rn}_{L^2}^{2}\,.
\end{align}
We observe that the first term on the right hand side of~\eqref{e:rzenergy} can be treated with the following Gaglirado-Nirenberg inequality and estimated by
\begin{align*}
-\norm{\de_y\rz}_{L^2_y}^2\leq -\frac{\norm{\rz-\bar\rho}_{L^2_y}^6}{C\norm{\rz-\bar\rho}_{L^1_y}^4}\leq -\frac{\norm{\rz-\bar\rho}_{L^2}^6}{CM^4}\,.
\end{align*}
Plugging this in~\eqref{e:rzenergy}, one has
\begin{align}\label{e:rzenergy1}
\nonumber
\frac{d}{dt}\|\rz-\overline{\rho}\|^2_{L^2_y}&\leq -C\nu\frac{\norm{\rz-\bar\rho}_{L^2}^6}{M^4}+ CM^2\nu\norm{\rz-\bar \rho}_{L^2_y}^2+C\nu\norm{\rn}_{L^2}^{10}+\frac{\nu}{256}\norm{\nabla\rn}_{L^2}^{2}\\
&\leq -C\nu\frac{\norm{\rz-\bar\rho}_{L^2_y}^2}{M^4}\Big(\norm{\rz-\bar\rho}_{L^2_y}^4-CM^6\Big)+C\nu\norm{\rn}_{L^2}^{10}+\frac{\nu}{256}\norm{\nabla\rn}_{L^2}^{2}\,.
\end{align}
We then define
\begin{equation*}
G(t)=\int_{0}^{t}\frac{\nu}{256}\|\nabla \rho_{\neq}\|^2_{L^2}+C\nu\|\rho_{\neq}\|^{10}_{L^2}ds\,.
\end{equation*}
Following the assumptions in~\eqref{eq:L2H1} and~\eqref{eq:L2}, it holds that for any $t\in [0, t_0]$, 
\begin{align*}
0\leq G(t)\leq \frac{1}{2}\norm{\rho_0}_{L^2}^2+\frac{\nu }{\lambda_\nu}C\norm{\rho_0}_{L^2}^{10}\leq \norm{\rho_0}_{L^2}^2\,,
\end{align*}
by choosing $\nu\leq \nu_0$ satisfying
\begin{align*}
\frac{\nu_0}{\lambda_{\nu_0}}\leq \frac{1}{2C\big(\norm{\rho_0}_{L^2}^8+1\big)}\,.
\end{align*}
Plugging the estimate for $G(t)$ into the inequality~\eqref{e:rzenergy1}, we obtain
\begin{align}
\frac{d}{dt}\left(\|\rz-\overline{\rho}\|^2_{L^2_y}-G(t)\right)
\leq-C\nu\frac{\|\rz-\overline{\rho}\|^2_{L^2_y}}{M^4}\left(\|\rz-\overline{\rho}\|^2_{L^2_y}-G(t)-\sqrt{C}M^3\right)
\left(\|\rz-\overline{\rho}\|^2_{L^2_y}+\sqrt{C}M^3\right)\,.
\end{align}
This implies for any $t\in[0, t_0]$,
\begin{align*}
\norm{\rz-\rho_0}_{L^2}^2\leq \sqrt C M^3+CM^2+\norm{\rho_0}_{L^2}^2\,.
\end{align*}
By denoting
\begin{equation*}
C^2_{L^2}\triangleq CM^3+CM^2+\|\rho_0\|^2_{L^2}\,,
\end{equation*}
we conclude that 
\begin{equation}
\| \rz-\overline{\rho}\|_{L^\infty(0, t_0;L_y^2})\leq 2C_{L^2}\,.
\end{equation}
\end{proof}

Again, through the standard energy estimate, we get the refined estimate of $\norm{\de_y\rz}_{L^2_y}$ in below.
\begin{lemma}[ $L^\infty \dot{H}^1_y$ estimate of $\rz$ ]\label{lem:LinfH1}
Let Assumption \ref{ass:KSegelBOOT} hold and $\rz$ and $\rn$ be the solutions of equations (\ref{e:rz}) and (\ref{e:rn}).  There exists $\nu_0=\nu(\rho_0)$, such that for any $\nu<\nu_0$, it holds that
 \begin{equation}
 \|\partial_y \rz\|_{L^\infty(0, T^\ast;L_y^2})\leq 2C_{\dot{H}^1}.
  \end{equation}
\end{lemma}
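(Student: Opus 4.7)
The plan mirrors the proof of Lemma \ref{lem:LinfL2} one derivative higher. I multiply the zero-mode equation \eqref{e:rz} by $-\partial_{yy}\rz$ and integrate by parts over $\T$ to obtain
\begin{equation*}
\frac12\frac{d}{dt}\|\partial_y\rz\|_{L^2_y}^2+\nu\|\partial_{yy}\rz\|_{L^2_y}^2
=\nu\!\int_\T\partial_y(\rz\partial_y\cz)\,\partial_{yy}\rz\,dy+\nu\!\int_\T\langle\nabla\cdot(\rn\nabla\cn)\rangle\,\partial_{yy}\rz\,dy.
\end{equation*}
Expanding the first integral via $\partial_{yy}\cz=\overline{\rho}-\rz$, and combining the bounds $\|\partial_y\cz\|_{L^\infty_y}\le CM$ and $\|\rz-\overline{\rho}\|_{L^2_y}\le 2C_{L^2}$ already obtained in Lemma \ref{lem:LinfL2} with the one-dimensional Gagliardo--Nirenberg inequalities $\|f\|_{L^\infty_y}\le C\|f\|_{L^2_y}^{1/2}\|\partial_y f\|_{L^2_y}^{1/2}$ and $\|f\|_{L^4_y}\le C\|f\|_{L^2_y}^{3/4}\|\partial_y f\|_{L^2_y}^{1/4}$, Young's inequality bounds this term by $\tfrac{\nu}{4}\|\partial_{yy}\rz\|_{L^2_y}^2+C(M,C_{L^2})\nu(1+\|\partial_y\rz\|_{L^2_y}^2)$.

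The main obstacle is the nonzero-mode coupling $\nu\int\langle\nabla\cdot(\rn\nabla\cn)\rangle\partial_{yy}\rz\,dy$. I bound $\|\langle\cdot\rangle\|_{L^2_y}$ by the full $L^2(\T^3)$ norm via Cauchy--Schwarz in $(x_1,x_2)$, rewrite $\nabla\cdot(\rn\nabla\cn)=\nabla\rn\cdot\nabla\cn-\rn^2$ using $\Delta\cn=-\rn$, and invoke elliptic regularity plus three-dimensional Sobolev embedding to obtain $\|\nabla\cn\|_{L^\infty}\le C\|\rn\|_{L^4}$. The bootstrap (A-5) together with (A-3)--(A-4) and the one-dimensional embedding used above make $\|\rn\|_{L^\infty}$ itself bounded by a constant $C_\ast(C_\infty,C_{L^2},C_{\dot H^1},M)$, so that $\|\rn\|_{L^4}\le C_\ast^{1/2}\|\rn\|_{L^2}^{1/2}$ and $\|\rn^2\|_{L^2}\le C_\ast\|\rn\|_{L^2}$. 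Another Young inequality then dominates the coupling by $\tfrac{\nu}{4}\|\partial_{yy}\rz\|_{L^2_y}^2+C(C_\ast)\nu\|\nabla\rn\|_{L^2}^2\|\rn\|_{L^2}+C(C_\ast)\nu\|\rn\|_{L^2}^2$.

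To close, I follow the subtraction trick of Lemma \ref{lem:LinfL2}. The identity $\|\partial_y\rz\|_{L^2_y}^2=-\int(\rz-\overline{\rho})\partial_{yy}\rz\,dy$ combined with Lemma \ref{lem:LinfL2} yields the Poincar\'e-type bound $\|\partial_{yy}\rz\|_{L^2_y}^2\ge\|\partial_y\rz\|_{L^2_y}^4/(4C_{L^2}^2)$, which converts the retained half of the dissipation into quartic damping. Setting $X(t)=\|\partial_y\rz(t)\|_{L^2_y}^2$ and $F(t)=C\nu(1+\|\nabla\rn\|_{L^2}^2\|\rn\|_{L^2}+\|\rn\|_{L^2}^2)$, I obtain
\begin{equation*}
X'\le-\frac{\nu}{8C_{L^2}^2}X^2+K_1\nu X+F(t),
\end{equation*}
and define $G(t):=\int_0^tF(s)\,ds$; by the bootstrap estimates (A-1), (A-2) and the exponential decay $\|\rn\|_{L^2}^2\le 32e^{-\lambda_\nu t/4}\|\rho_0\|_{L^2}^2$, $G$ is bounded uniformly in $t$ by some $\bar G=\bar G(\rho_0,C_{L^2},C_{\dot H^1},C_\infty,M)$. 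Writing the inequality for $X-G$ and factoring the right-hand side as a quadratic in $X$, a barrier argument forces $X(t)$ to stay below $\bar G+X_\ast$, with $X_\ast$ depending only on $C_{L^2}$, $K_1$ and $\|\partial_y\rho_0\|_{L^2_y}$. Defining $C_{\dot H^1}^2$ as the appropriate algebraic combination of these quantities so that $\bar G+X_\ast\le 4C_{\dot H^1}^2$, and choosing $\nu_0$ small enough that the $\nu/\lambda_\nu$-dependent contributions to $\bar G$ do not spoil this inequality, we obtain $\|\partial_y\rz\|_{L^\infty(0,t_0;L^2_y)}\le 2C_{\dot H^1}$ as required.
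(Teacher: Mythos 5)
Your proposal is correct and follows essentially the same route as the paper: the same $\dot H^1_y$ energy identity (testing with $-\partial_{yy}\rz$ is equivalent, after integration by parts, to differentiating the equation and testing with $\partial_y\rz$), the same elliptic and Gagliardo--Nirenberg bounds on $\cz$ and $\cn$, the quartic damping $\|\partial_{yy}\rz\|_{L^2_y}^2\gtrsim \|\partial_y\rz\|_{L^2_y}^4/C_{L^2}^2$ inherited from Lemma~\ref{lem:LinfL2}, and the same subtract-$G$ barrier argument with $G$ controlled by the bootstrap assumptions. One small simplification: since $\rz$ is an $(x_1,x_2)$-average of $\rho$, one has $\|\rz\|_{L^\infty}\le\|\rho\|_{L^\infty}\le 4C_\infty$ directly from (A-5), so your constant $C_\ast$ need not depend on $C_{\dot H^1}$ via (A-4), which removes the mild self-reference in your final choice of $C_{\dot H^1}$.
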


\begin{proof}
Applying  the operator $\partial_y$ to (\ref{e:rz}), it yields that
\begin{equation}\label{e:derz}
\partial_t\partial_y\rz-\nu\partial_{yyy}\rz+\nu\partial_{yy}(\rz\partial_y \cz)+\nu\partial_y\langle\nabla\cdot(\rho_{\neq} \nabla c_{\neq})\rangle=0\,.
\end{equation}
Let us multiply both sides  by $\partial_y\rz$ and integrate  over $\mathbb{T}$. We then get
\begin{align}\label{e:derzenergy}
\nonumber
\frac{1}{2}\frac{d}{dt}\|\partial_y\rz\|^2_{L^2_y}+\nu\|\partial_{yy}\rz\|^2_{L^2_y}&=-\nu\int_{\mathbb{T}}\partial_{yy}(\rz\partial_y \cz)\partial_y\rz dy-\nu\int_{\mathbb{T}}\partial_y\langle\nabla\cdot(\rho_{\neq} \nabla c_{\neq})\rangle \partial_y\rz dy\\
\nonumber
&=\nu\int_{\T}\de_y\rz\de_y\cz\de_{yy}\rz\,dy+\int_{\T}\rz\de_{yy}\cz\de_{yy}\rz\,dy\\
\nonumber
&\quad+\nu\int_{\mathbb{T}}\langle\nabla\cdot(\rho_{\neq} \nabla c_{\neq})\rangle \partial_{yy}\rz dy\\
\nonumber
&\leq C\nu\norm{\de_y\rz}_{L^2_y}\norm{\de_y\cz}_{L^\infty_y}\norm{\de_{yy}\rz}_{L_y^2}+C\nu\norm{\rz}_{L^2_y}\norm{\de_{yy}\cz}_{L^\infty_y}\norm{\de_{yy}\rz}_{L^2_y}\\
&\quad+C\nu\norm{\de_y\rn}_{L^2}\norm{\de_y\cn}_{L^\infty}\norm{\de_{yy}\rz}_{L^2_y}+C\nu\norm{\rn}_{L^2}\norm{\de_{yy}\cn}_{L^\infty}\norm{\de_{yy}\rz}_{L^2_y}\,,
\end{align}
where we used the integration by parts and H\"older' inequality in the above estimate.

Again, we apply the following Gagliardo-Nirenberg inequalities,
\begin{align*}
\norm{\de_y\cz}_{L^\infty_y}&\leq \norm{\de_{yy}\cz}_{L^1_y}\leq C\norm{\rz-\bar\rho}_{L^1_y}\leq CM\,,\\
\norm{\de_{yy}\cz}_{L^\infty_y}&=\norm{\rz-\bar\rho}_{L^\infty_y}\leq \norm{\de_y\rz}_{L^2_y}\,,\\
\norm{\de_y\cn}_{L^\infty}&\leq C\norm{\nabla^2\cn}_{L^4}\leq C\norm{\rn}_{L^4}\,,
\end{align*}
 to get
\begin{align*}
\frac{1}{2}\frac{d}{dt}\|\partial_y\rz\|^2_{L^2_y}+\nu\|\partial_{yy}\rz\|^2_{L^2_y}&\leq CM\nu\norm{\de_y\rz}_{L^2_y}\norm{\de_{yy}\rz}_{L^2_y}+C\nu\norm{\rz}_{L^2_y}\norm{\de_y\rz}_{L^2_y}\norm{\de_{yy}\rz}_{L^2_y}\\
&\quad+C\nu\norm{\nabla\rn}_{L^2}\norm{\rn}_{L^4}\norm{\de_{yy}\rz}_{L^2_y}+C\nu\norm{\rn}_{L^2}\norm{\rn}_{L^\infty}\norm{\de_{yy}\rz}_{L^2_y}\,.
\end{align*}
We then use Young's inequality and Assumption~\eqref{ass:KSegelBOOT} to get
\begin{align}\label{e:derzenergy1}
\nonumber
\frac{d}{dt}\|\partial_y\rz\|^2_{L^2_y}&\leq -\nu\|\partial_{yy}\rz\|^2_{L^2_y}+CM^2\nu\norm{\de_y\rz}_{L^2_y}^2+C\nu\norm{\rz}_{L^2_y}^2\norm{\de_y\rz}_{L^2_y}^2\\
&\quad+C\nu\norm{\nabla \rn}_{L^2}^2\norm{\rn}_{L^4}^2+C\nu\norm{\rn}_{L^2}^2\norm{\rn}_{L^\infty}^2\,.
\end{align}
Define
\begin{equation*}
G(t)=\int_{0}^tC\nu \|\nabla\rho_{\neq}\|^2_{L^2}\|\rho_{\neq}\|^2_{L^4}+C\nu\|\rho_{\neq}\|^2_{L^2}\|\rho_{\neq}\|^2_{L^\infty}ds\,.
\end{equation*}
From Assumption~\eqref{ass:KSegelBOOT}, it holds that for any $t\in [0, t_0]$,
\begin{equation}\label{e:deG}
0\leq G(t) \leq CC_\infty^2(1+\frac{\nu}{\lambda_\nu})\norm{\rho_0}_{L^2}^2\leq 2CC_\infty^2\norm{\rho_0}_{L^2}^2\,,
\end{equation}
where we use the fact that $\nu/\lambda_\nu \ll 1$ when $\nu$ is sufficiently small.
Applying Gagliardo-Nirenberg inequality and (\ref{eq:LinfL2}), one has
\begin{equation}\label{e:derztmp1}
-\|\partial_{yy}\rz\|^{2}_{L^2}\leq -\frac{\|\partial_y\rz\|^4_{L^2}}{C\|\rz-\overline{\rho}\|^{2}_{L^2}}\leq -\frac{\|\partial_y\rz\|^4_{L^2}}{CC^2_{L^2}}.
\end{equation}
By Assumption \ref{ass:KSegelBOOT} and (\ref{e:L1}), it holds that
\begin{equation}\label{e:derztmp2}
\|\rz-\overline{\rho}\|^2_{L^2_y}\leq CC^2_{L^2},\ \ \ \ \|\rz\|^2_{L^2_y}\leq CC^2_{L^2}+CM^2.
\end{equation}
Combining (\ref{e:derzenergy1}), (\ref{e:deG}), (\ref{e:derztmp1}) and~\eqref{e:derztmp2}, one has
\begin{equation}
\begin{aligned}
\frac{d}{dt}(\|\partial_y\rz\|^2_{L^2_y}-G(t))\leq -\frac{C\nu}{C^2_{L^2}+M^2}\|\partial_y\rz\|^2_{L^2_y}\left(\|\partial_y\rz\|^2_{L^2_y}-G(t)-C(C^2_{L^2}+M^2)^2\right)\,.
\end{aligned}
\end{equation}
This implies
\begin{equation}
\|\partial_y\rz\|^2_{L^2_y}\leq \|\partial_y\rho_0\|^2_{L^2_y}+ 2CC_\infty^2\norm{\rho_0}_{L^2}^2+C(C_{L^2}^2+M^2)^2\,.
\end{equation}
Thus, we can choose $C_{\dot{H}^1}>\|\partial_y\rho_0\|^2_{L^2_y}+ 2CC_\infty^2\norm{\rho_0}_{L^2}^2+C(C_{L^2}^2+M^2)^2$, such that for any $t\in [0,t_0]$, one has
\begin{equation*}
\|\partial_y \rz\|_{L^\infty(0, t_0;L_y^2})\leq 2C_{\dot{H}^1}\,.
\end{equation*}
as claimed.
\end{proof}

Since the $L^2$ estimate of solution to equation (\ref{e:KSegel}) is supercritical, by Moser iteration, the $L^\infty$ estimate of solution is obtained.

\begin{lemma}[ $L^\infty L^\infty$ estimate of $\rho$ ]\label{lem:LinfLinf}
Assume  Assumption \ref{ass:KSegelBOOT} hold. Let $\rz$ and $\rho_{\neq}$ be the solutions of equations (\ref{e:rz}) and (\ref{e:rn}).  Then one has
\begin{equation}\label{eq:LemmaLinf}
\|\rho\|_{L^\infty(0, T^\ast;L^\infty})\leq 2C_\infty\,.
\end{equation}
\end{lemma}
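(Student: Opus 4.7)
The plan is to run a classical Moser iteration on the full equation for $\rho$, using the $L^2$ bound on $\rho_{\neq}$ from (A-2) together with the $L^\infty L^2_y$ control on $\rz$ from (A-3) to feed the iteration. Since the bootstrap assumptions already pin down $\|\rho\|_{L^\infty_t L^2_x}$ uniformly in $\nu$, the iteration will only need to propagate $L^p$ bounds forward and absorb the self-interaction coming from $-\Delta c = \rho-\bar\rho$.

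First I would test the full equation \eqref{e:KSegel} against $\rho^{p-1}$ for $p\geq 2$. The transport term drops thanks to $\nabla\cdot v=0$ (the flow is divergence free), so
\begin{align*}
\frac{1}{p}\frac{d}{dt}\|\rho\|_{L^p}^p + \nu\,\frac{4(p-1)}{p^2}\,\|\nabla \rho^{p/2}\|_{L^2}^2
&= \nu\,\frac{p-1}{p}\int_{\T^3}\rho^p(\rho-\bar\rho)\,dx \\
&\leq \nu\,(p-1)\,\|\rho\|_{L^{p+1}}^{p+1},
\end{align*}
after substituting $-\Delta c = \rho-\bar\rho$ and integrating by parts. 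Writing $w = \rho^{p/2}$ and using the three-dimensional Gagliardo-Nirenberg inequality
$\|w\|_{L^{2(p+1)/p}}^{2(p+1)/p} \leq C\,\|\nabla w\|_{L^2}^{3/p}\,\|w\|_{L^2}^{2(p+1)/p - 3/p}$,
followed by Young's inequality, absorbs the right-hand side into $\|\nabla \rho^{p/2}\|_{L^2}^2$ at the cost of a power of $\|\rho\|_{L^p}$. This yields, for some universal constant $C$ and an explicit polynomial factor $P(p)$,
\begin{equation*}
\frac{d}{dt}\|\rho\|_{L^p}^p + \frac{\nu}{C}\|\nabla \rho^{p/2}\|_{L^2}^2 \leq C\,\nu\,P(p)\,\|\rho\|_{L^p}^{\alpha p},
\end{equation*}
where $\alpha>1$ is an explicit exponent coming from the interpolation.

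Next I would set $p_k = 2^{k+1}$ and define $A_k := \sup_{t\in[0,t_0]}\|\rho(t)\|_{L^{p_k}}$. The base step $k=0$ uses Assumption (A-2) together with (A-3) and the conservation of $M=\|\rho_0\|_{L^1}$ to bound $A_0 = \|\rho\|_{L^\infty_t L^2_x}$ by a constant depending only on $\rho_0$, uniformly in $\nu$ and $t_0$. Integrating the differential inequality in time and using a second Gagliardo-Nirenberg inequality to turn the space-time bound $\nu\|\nabla \rho^{p/2}\|_{L^2_t L^2_x}^2$ and the pointwise $L^p$ control into an $L^{p_{k+1}}$ bound gives the recursion $A_{k+1} \leq (Cp_k)^{\beta/p_k} A_k^{\gamma}$ for explicit $\beta,\gamma>0$ with $\gamma^{1/2^k}\to 1$. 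Taking $k\to\infty$ yields $\|\rho\|_{L^\infty_t L^\infty_x} \leq F(\|\rho_0\|_{L^2\cap L^\infty}, M)$ with $F$ independent of $\nu$ and $t_0$, so choosing $C_\infty$ larger than $F$ (and larger than $\|\rho_0\|_{L^\infty}/2$) gives \eqref{eq:LemmaLinf}.

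The only delicate point in the execution is the book-keeping of constants so that the final bound depends on $\rho_0$ alone and not on $\nu$ or $t_0$; this requires noticing that the $\nu$ in front of the dissipation and in front of the self-interaction term on the right cancels in the scaling one uses for Moser iteration, and that the $L^2$ datum $A_0$ is precisely what (A-2)--(A-3) and mass conservation give uniformly. Once that is done, the iteration is standard, and the desired $L^\infty$ bound follows immediately.
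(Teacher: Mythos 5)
Your proposal is correct and follows essentially the same route as the paper: both first assemble the uniform-in-time $L^2$ bound on $\rho$ from the bootstrap assumptions (A-2), (A-3) and mass conservation, and then invoke the supercriticality of $L^2$ in three dimensions to upgrade to $L^\infty$ via Moser--Alikakos iteration. The only difference is that the paper cites Alikakos for the iteration while you sketch the testing against $\rho^{p-1}$ and the recursion explicitly, correctly noting the cancellation of $\nu$ between the dissipation and the chemotactic source that makes the final bound uniform in $\nu$ and $t_0$.
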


\begin{proof}
By Assumption \ref{ass:KSegelBOOT}, such that for any $t\in [0,t_0]$, one has
\begin{equation*}
\|\rho\|_{L^2}\leq C\|\rho_{\neq}\|_{L^2}+C\|\rz-\overline{\rho}\|_{L^2}+C\overline{\rho}\leq C\|\rho_0\|_{L^2}+CC_{L^2}+CM<\infty\,.
\end{equation*}
As $L^2$ norm is supercritical (see~\cite{Jager1992explosions, Kowalczyk2005preventing} for reference),  by Moser-Alikakos iteration~\cite{Alikakos1979lp}, the uniform-in-time bound of the $L^\infty$ norm only depends on the uniform-in-time bound of the $L^2$ norm. Thus the estimate~\eqref{eq:LemmaLinf} follows by choosing $C_\infty$ appropriately.
\end{proof}

Next, we establish the enhanced dissipation estimate of $\rho_{\neq}$ using the fast dissipation property of the underlying semigroup.

\begin{lemma}[Enhanced dissipation estimate of $\rho_{\neq}$ ]\label{lem:L2}
Assume  Assumption \ref{ass:KSegelBOOT} hold. Let $\rz$ and $\rho_{\neq}$ be the solutions of equations (\ref{e:rz}) and (\ref{e:rn}). Then there exist $\nu_0=\nu(\rho_0)$, if $\nu<\nu_0$, one has
\begin{equation}\label{eq:4.73}
\| \rho_{\neq}(t)\|^2_{L^2}\leq 16e^{-\lambda_\nu t/4}\|\rho_{0}\|^2_{L^2}.
\end{equation}
\end{lemma}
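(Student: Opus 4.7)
The plan is to mirror the strategy used for Lemma~\ref{l:decaytau} and Lemma~\ref{lem:B1} in the Kuramoto--Sivashinsky case, adapted to the Keller--Segel nonlinearity. First I would rewrite equation~\eqref{e:rn} in the mild form using the semigroup $\cS_t = e^{-tH_\nu}$ provided by Corollary~\ref{c:decay}:
\begin{equation*}
\rho_{\neq}(s+t) = \cS_t \rho_{\neq}(s) + \int_s^{s+t} \cS_{s+t-\tau} N(\tau)\,\dd\tau,
\end{equation*}
where $N$ collects the five coupling/nonlinear terms on the right of~\eqref{e:rn}, namely $-\nu \nabla\rz\cdot\nabla c_{\neq}$, $-\nu\nabla\rho_{\neq}\cdot\nabla\cz$, $-\nu(\nabla\cdot(\rho_{\neq}\nabla c_{\neq}))_{\neq}$, $\nu\rz\rho_{\neq}$, and $\nu\rho_{\neq}(\rz-\overline{\rho})$. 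Choosing $\tau^* = 4/\lambda_\nu$ as in Lemma~\ref{l:decaytau}, the decay estimate of Corollary~\ref{c:decay} gives $\|\cS_{\tau^*}\rho_{\neq}(s)\|_{L^2}\leq e^{\pi/2 - 4}\|\rho_{\neq}(s)\|_{L^2}\leq e^{-2}\|\rho_{\neq}(s)\|_{L^2}$.

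Next I would estimate $\|N(\tau)\|_{L^2}$ using the same toolbox as in Lemma~\ref{lem:L2H1}: elliptic regularity $\|\nabla c_{\neq}\|_{L^p}\lesssim \|\rho_{\neq}\|_{L^q}$, $\|\nabla\cz\|_{L^\infty_y}\lesssim M$, the Gagliardo--Nirenberg inequalities~\eqref{e:sobolev}, and the bootstrap hypotheses (A-3)--(A-5) to absorb $\rz$, $\partial_y\rz$, and $\|\rho\|_{L^\infty}$ into fixed constants. This produces pointwise bounds of the schematic form
\begin{equation*}
\|N(\tau)\|_{L^2}\leq C(\rho_0)\,\nu\bigl(\|\rho_{\neq}\|_{L^2} + \|\nabla\rho_{\neq}\|_{L^2} + \|\rho_{\neq}\|_{L^2}^{1/4}\|\nabla\rho_{\neq}\|_{L^2}^{7/4}\bigr).
\end{equation*}
Integrating over $[s, s+\tau^*]$, applying Cauchy--Schwarz in time, and invoking the bootstrap assumptions (A-1) and (A-2), each contribution yields a factor $(\nu\tau^*)^{\alpha}\|\rho_{\neq}(s)\|_{L^2}$ with some $\alpha>0$; since $\nu\tau^* = 4\nu/\lambda_\nu\to 0$ as $\nu\to 0$, one obtains
\begin{equation*}
\|\rho_{\neq}(s+\tau^*)\|_{L^2}\leq \frac{1}{e^2}\|\rho_{\neq}(s)\|_{L^2} + C(\rho_0)(\nu\tau^*)^{\alpha}\|\rho_{\neq}(s)\|_{L^2}\leq \frac{1}{e}\|\rho_{\neq}(s)\|_{L^2},
\end{equation*}
provided $\nu_0 = \nu_0(\rho_0)$ is chosen small enough.

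Finally I would propagate this constant-fraction decay. Iterating gives $\|\rho_{\neq}(s+n\tau^*)\|_{L^2}\leq e^{-n}\|\rho_{\neq}(s)\|_{L^2}$. For intermediate times $t\in[s+n\tau^*, s+(n+1)\tau^*]$, a direct energy estimate on~\eqref{e:rn} — essentially~\eqref{e:rnenergy2} with the bootstrap constants — gives $\tfrac{\dd}{\dd t}\|\rho_{\neq}\|_{L^2}^2\leq C_\ast \nu\|\rho_{\neq}\|_{L^2}^2$, so that imposing $C_\ast\nu\tau^*\leq \ln 2$ (again a condition of the form $\nu_0/\lambda_{\nu_0}\leq c(\rho_0)$) yields $\|\rho_{\neq}(t)\|_{L^2}\leq \sqrt{2}\,\|\rho_{\neq}(s+n\tau^*)\|_{L^2}\leq \sqrt{2}\,e^{1-(t-s)/\tau^*}\|\rho_{\neq}(s)\|_{L^2}\leq 4 e^{-\lambda_\nu (t-s)/4}\|\rho_{\neq}(s)\|_{L^2}$; squaring and taking $s=0$ gives~\eqref{eq:4.73}.

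The main technical obstacle will be verifying that the five distinct terms in $N$ can all be absorbed by the Duhamel integral with a uniform positive exponent of $\nu\tau^*$. The cubic term $(\nabla\cdot(\rho_{\neq}\nabla c_{\neq}))_{\neq}$ is the most delicate because it carries two factors of $\rho_{\neq}$ (one via $c_{\neq}$), forcing a careful Gagliardo--Nirenberg interpolation to convert the $\|\nabla\rho_{\neq}\|_{L^2}$ powers into time integrals controlled by (A-1) and the $L^\infty_tL^2$ bound (A-2), so that everything closes with only the bootstrap constants and with a clean smallness in $\nu/\lambda_\nu$.
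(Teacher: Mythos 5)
Your proposal follows essentially the same route as the paper's proof: Duhamel's formula with the semigroup bound of Corollary~\ref{c:decay}, control of the five nonlinear/coupling terms via elliptic regularity, Gagliardo--Nirenberg, and the bootstrap assumptions so that each Duhamel contribution carries a positive power of $\nu/\lambda_\nu$, then constant-fraction decay over a time $\tau^*\sim\lambda_\nu^{-1}$, iteration, and an energy-estimate bridge for intermediate times. The only differences are cosmetic (the paper takes $\tau^*=8/\lambda_\nu$ rather than $4/\lambda_\nu$ and tracks the decaying factors $e^{-\lambda_\nu s/8}$ explicitly through the Duhamel integral), so the argument is correct as proposed.
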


\begin{proof}
By Duhamel's principle, the solution of equation (\ref{e:rn}) can be rewritten  as
\begin{equation*}
\begin{aligned}
 \rho_{\neq}(s+t)&=\mathcal{S}_{t}\rho_{\neq}(s)\\
 &-\nu\int_{s}^{s+t}\mathcal{S}_{t+s-\tau}\left(\nabla\rz\cdot\nabla c_{\neq}+\nabla\rho_{\neq}\cdot\nabla \cz+\left(\nabla\cdot(\rho_{\neq} \nabla c_{\neq})\right)_{\neq}- \rz\rho_{\neq}-\rho_{\neq}(\rz-\overline{\rho})\right)d\tau.
 \end{aligned}
\end{equation*}
This implies
\begin{equation*}
\begin{aligned}
\|\rho_{\neq}(t+s)\|_{L^2}&\leq \|\mathcal{S}_{t}\rho_{\neq}(s)\|_{L^2}+C\nu\int_{s}^{s+t}\Big(\|\partial_y\rz\partial_yc_{\neq}\|_{L^2}+\|\partial_y\rho_{\neq}\partial_y \cz\|_{L^2}+\|\left(\nabla\cdot(\rho_{\neq} \nabla c_{\neq})\right)_{\neq}\|_{L^2}\\
&\quad+\|\rz\rho_{\neq}\|_{L^2}+\|\rho_{\neq}(\rz-\overline{\rho})\|_{L^2}\Big)d\tau\\
&\leq \|\mathcal{S}_{t}\rho_{\neq}(s)\|_{L^2}+C\nu\int_{s}^{s+t}\Big(\norm{\de_y\rz}_{L^2_y}\norm{\de_y \cn}_{L^\infty}+\norm{\nabla \rn}_{L^2}\norm{\de_y\cz}_{L^\infty_y}+\norm{\nabla\rn}_{L^2}\norm{\nabla \cn}_{L^\infty}\\
&\quad+\norm{\rn}_{L^2}\norm{\Delta\cn}_{L^\infty}+\norm{\rz}_{L^\infty}\norm{\rn}_{L^2}+\norm{\rz-\bar\rho}_{L^\infty}\norm{\rn}_{L^2}\Big)\,d\tau\,.
\end{aligned}
\end{equation*}
Applying the Gagliardo-Nirenberg inequalities in below,
\begin{align*}
\norm{\nabla\cn}_{L^\infty}&\leq C\norm{\nabla^2\cn}_{L^4} \leq C\norm{\rn}_{L^4}\leq C\norm{\rn}_{L^2}^{1/4}\norm{\nabla \rn}_{L^2}^{3/4}\,,\\
\norm{\de_y\cz}_{L^\infty_y}&\leq C\norm{\de_{yy}\cz}_{L^1_y}=C\norm{\rz}_{L^1}\leq CM\,,
\end{align*}
we get
\begin{align}
\nonumber
\|\rho_{\neq}(t+s)\|_{L^2}&\leq \|\mathcal{S}_{t}\rho_{\neq}(s)\|_{L^2}+C\nu\int_{s}^{s+t}\Big(\norm{\de_y\rz}_{L^2_y}\norm{\rn}_{L^2}^{1/4}\norm{\nabla\rn}_{L^2}^{3/4}+M\norm{\nabla\rn}_{L^2}\\
\nonumber
&\quad+\norm{ \rn}_{L^2}^{1/4}\norm{\nabla \rn}_{L^2}^{7/4}+\norm{\rn}_{L^2}\norm{\rn}_{L^\infty}+\norm{\rz}_{L^\infty}\norm{\rn}_{L^2}+\norm{\rz-\bar\rho}_{L^\infty}\norm{\rn}_{L^2}\Big)\,d\tau\,.
\end{align}
We then use Corollary~\ref{c:decay}, H\"older's inequality and Assumption~\ref{ass:KSegelBOOT} to get
\begin{align}\label{e:semibound}
\nonumber
\|\rho_{\neq}(t+s)\|_{L^2}&\leq e^{-\lambda_\nu t+\pi/2}\norm{\rn(s)}_{L^2}
+C\Big(\nu\int_s^{s+t}\norm{\de_y\rz}_{L^2_y}^{8/5}\norm{\rn}_{L^2}^{2/5}\,d\tau\Big )^{5/8}\Big(\nu\int_s^{s+t}\norm{\nabla\rn}_{L^2}^2\,d\tau\Big)^{3/8}\\
\nonumber
&\quad+CM(\nu t)^{1/2}\Big(\nu\int_s^{s+t}\norm{\nabla\rn}_{L^2}^2\,d\tau\Big)^{1/2}+C\Big(\nu\int_s^{s+t}\norm{\rn}_{L^2}^2\,d\tau\Big)^{1/8}\Big(\nu\int_s^{s+t}\norm{\nabla\rn}_{L^2}^2\,d\tau\Big)^{7/8}\\
\nonumber
&\quad+C\nu\int_s^{s+t}\norm{\rn}_{L^2}\norm{\rn}_{L^\infty}+\norm{\rz}_{L^\infty}\norm{\rn}_{L^2}+\norm{\rz-\bar\rho}_{L^\infty}\norm{\rn}_{L^2}\,d\tau\\
\nonumber
&\leq 30 e^{-\lambda_\nu t}e^{-\lambda_\nu s/8}\norm{\rho_0}_{L^2}+CC_{\dot H^1}\Big(\frac{\nu}{\lambda_{\nu}}\Big)^{5/8}e^{-\lambda_\nu s/8}\norm{\rho_0}_{L^2}\\
&\quad+CM(\nu t)^{1/2}e^{-\lambda_\nu s/8}\norm{\rho_0}_{L^2}
+C\Big(\frac{\nu}{\lambda_\nu}\Big)^{1/8}e^{-\lambda_\nu s/4}\norm{\rho_0}_{L^2}^2+CC_\infty \frac{\nu}{\lambda_\nu}e^{-\lambda_\nu s/8}\norm{\rho_0}_{L^2}\,.
\end{align}
Taking  $t=\tau^*:=8/\lambda_\nu$, equation~\eqref{e:semibound} yields to
\begin{align}
\|\rho_{\neq}(\tau^*+s)\|_{L^2}&\leq e^{-2}e^{-\lambda_\nu s/8}\norm{\rho_0}_{L^2}+C\Big(\frac{\nu}{\lambda_\nu}\Big)^{1/8}\big(C_{\dot H^1}+M+C_\infty+1\big)e^{-\lambda_\nu s/8}\norm{\rho_0}_{L^2}\,,
\end{align}
where we also used the fact that $\nu /\lambda_\nu\ll 1$ by when $\nu$ is small enough. We further choose $\nu_0$ satisfying
\begin{align}
\frac{\nu_0}{\lambda_{\nu_0}}\leq \Big(\frac{e^{-1}-e^{-2}}{C(C_{\dot H^1}+M+C_\infty+1)}\Big)^{8}\,,
\end{align}
and obtain for any $s\in[0, t_0]$
\begin{align}
\|\rho_{\neq}(\tau^*+s)\|_{L^2}&\leq e^{-1}e^{-\lambda_\nu s/8}\norm{\rho_0}_{L^2}\,.
\end{align}
By taking $s=(n-1)\tau^*$, we get
\begin{align}
\norm{\rn(n\tau^*)}_{L^2}\leq e^{-n}\norm{\rho_0}_{L^2}\,.
\end{align}
Recalling~\eqref{e:rnenergy2}, for any $s\in [0, t_0]$, $0\leq t\leq \tau^*$ with $s+t\leq t_0$,  by choosing
 \begin{align}
 \frac{\nu_0}{\lambda_{\nu_0}}\leq \frac{\ln 2}{8C(C_{\dot H^1}^{8/5}+C_\infty+\norm{\rho_0}_{L^2}^4+\norm{\rho_0}_{L^2}^8)}\,,
 \end{align}
one has
\begin{align*}
\norm{\rn(s+t)}_{L^2}^2\leq 2\norm{\rn(s)}_{L^2}^2\,.
\end{align*}
Since for any $t\in[0, t_0]$, there exists $n$ such that $t\in [(n-1)\tau^*, n\tau^*]$. It then implies
\begin{align}
\norm{\rn(t)}_{L^2}\leq \sqrt 2e^{-(n-1)}\norm{\rho_0}_{L^2}\leq \sqrt 2e^{1-t/\tau^*}\norm{\rho_0}_{L^2}\leq 4 e^{-\lambda_\nu t/8}\norm{\rho_0}_{L^2} \,.
\end{align}
This finishes the proof.
\end{proof}

\section*{Acknowledgments}
The authors thank Anna Mazzucato and Siming He for useful discussions. B. Shi and W. Wang were supported by the National Natural Science Foundation of China ( Grant No.11771284).

\bibliographystyle{plain}
\bibliography{helical}

@article{Alikakos1979lp,
  title={Lp bounds of solutions of reaction-diffusion equations},
  author={Alikakos, Nicholas D},
  journal={Communications in Partial Differential Equations},
  volume={4},
  number={8},
  pages={827--868},
  year={1979},
  publisher={Taylor \& Francis}
}

@article{Ettinger2009global,
  title={Global existence and uniqueness of weak solutions of three-dimensional Euler equations with helical symmetry in the absence of vorticity stretching},
  author={Ettinger, Boris and Titi, Edriss S},
  journal={SIAM journal on mathematical analysis},
  volume={41},
  number={1},
  pages={269--296},
  year={2009},
  publisher={SIAM}
}

@article{Lopes2014planar,
  title={Planar limits of Three-Dimensional incompressible flows with helical symmetry},
  author={Lopes Filho, Milton C and Mazzucato, Anna L and Niu, Dongjuan and Lopes, Helena J Nussenzveig and Titi, Edriss S},
  journal={Journal of Dynamics and Differential Equations},
  volume={26},
  number={4},
  pages={843--869},
  year={2014},
  publisher={Springer}
}

@article{BedrossianZelati2017enhanced,
  title={Enhanced dissipation, hypoellipticity, and anomalous small noise inviscid limits in shear flows},
  author={Bedrossian, Jacob and Zelati, Michele Coti},
  journal={Archive for Rational Mechanics and Analysis},
  volume={224},
  number={3},
  pages={1161--1204},
  year={2017},
  publisher={Springer}
}

@article{ZelatiDelgadinoElgindi2020relation,
  title={On the relation between enhanced dissipation timescales and mixing rates},
  author={Zelati, Michele Coti and Delgadino, Matias G and Elgindi, Tarek M},
  journal={Communications on Pure and Applied Mathematics},
  volume={73},
  number={6},
  pages={1205--1244},
  year={2020},
  publisher={Wiley Online Library}
}

@article{FengIyer2019dissipation,
  title={Dissipation enhancement by mixing},
  author={Feng, Yuanyuan and Iyer, Gautam},
  journal={Nonlinearity},
  volume={32},
  number={5},
  pages={1810},
  year={2019},
  publisher={IOP Publishing}
}

@article{Ambrose2021global,
  title={Global solutions of the two-dimensional Kuramoto-Sivashinsky equation with a linearly growing mode in each direction},
  author={Ambrose, David M and Mazzucato, Anna L},
  journal={arXiv preprint arXiv:2102.05093},
  year={2021}
}

@article{Ambrose2019global,
  title={Global existence and analyticity for the 2D Kuramoto--Sivashinsky equation},
  author={Ambrose, David M and Mazzucato, Anna L},
  journal={Journal of Dynamics and Differential Equations},
  volume={31},
  number={3},
  pages={1525--1547},
  year={2019},
  publisher={Springer}
}

@article{Sell1991local,
  title={Local dissipativity and attractors for the Kuramoto--Sivashinsky equation in thin 2D domains},
  author={Sell, George R and Taboada, Mario},
  year={1991}
}

@article{Larios2020well,
  title={On the well-posedness of an anisotropically-reduced two-dimensional Kuramoto--Sivashinsky equation},
  author={Larios, Adam and Yamazaki, Kazuo},
  journal={Physica D: Nonlinear Phenomena},
  volume={411},
  pages={132560},
  year={2020},
  publisher={Elsevier}
}

@article{Benachour2014anisotropic,
  title={Anisotropic estimates for the two-dimensional Kuramoto--Sivashinsky equation},
  author={Benachour, Said and Kukavica, Igor and Rusin, Walter and Ziane, Mohammed},
  journal={Journal of Dynamics and Differential Equations},
  volume={26},
  number={3},
  pages={461--476},
  year={2014},
  publisher={Springer}
}

@article{Goluskin2019bounds,
  title={Bounds on mean energy in the Kuramoto--Sivashinsky equation computed using semidefinite programming},
  author={Goluskin, David and Fantuzzi, Giovanni},
  journal={Nonlinearity},
  volume={32},
  number={5},
  pages={1705},
  year={2019},
  publisher={IOP Publishing}
}

@article{Giacomelli2005new,
  title={New bounds for the Kuramoto-Sivashinsky equation},
  author={Giacomelli, Lorenzo and Otto, Felix},
  journal={Communications on Pure and Applied Mathematics: A Journal Issued by the Courant Institute of Mathematical Sciences},
  volume={58},
  number={3},
  pages={297--318},
  year={2005},
  publisher={Wiley Online Library}
}

@article{Goldman2015new,
  title={New bounds for the inhomogenous Burgers and the Kuramoto-Sivashinsky equations},
  author={Goldman, Michael and Josien, Marc and Otto, Felix},
  journal={Communications in Partial Differential Equations},
  volume={40},
  number={12},
  pages={2237--2265},
  year={2015},
  publisher={Taylor \& Francis}
}

@article{Otto2009optimal,
  title={Optimal bounds on the Kuramoto--Sivashinsky equation},
  author={Otto, Felix},
  journal={Journal of Functional Analysis},
  volume={257},
  number={7},
  pages={2188--2245},
  year={2009},
  publisher={Elsevier}
}

@article{Collet1993analyticity,
  title={Analyticity for the Kuramoto-Sivashinsky equation},
  author={Collet, Pierre and Eckmann, J-P and Epstein, Henri and Stubbe, Joachim},
  journal={Physica D: Nonlinear Phenomena},
  volume={67},
  number={4},
  pages={321--326},
  year={1993},
  publisher={Elsevier}
}

@article{Bronski2006uncertainty,
  title={Uncertainty estimates and L2 bounds for the Kuramoto--Sivashinsky equation},
  author={Bronski, Jared C and Gambill, Thomas N},
  journal={Nonlinearity},
  volume={19},
  number={9},
  pages={2023},
  year={2006},
  publisher={IOP Publishing}
}

@article{Wei2019diffusion,
  title={Diffusion and mixing in fluid flow via the resolvent estimate},
  author={Wei, Dongyi},
  journal={Science China Mathematics},
  pages={1--12},
  year={2019},
  publisher={Springer}
}

@article{ZelatiDolceFengMazzucato2021global,
  title={Global Existence for the Two-dimensional Kuramoto-Sivashinsky equation with a Shear Flow},
  author={Zelati, Michele Coti and Dolce, Michele and Feng, Yuanyuan and Mazzucato, Anna L},
  journal={arXiv preprint arXiv:2103.02971},
  year={2021}
}

@article{FengmMazzucato2020global,
  title={Global existence for the two-dimensional Kuramoto-Sivashinsky equation with advection},
  author={Feng, Yuanyuan and Mazzucato, Anna L},
  journal={arXiv preprint arXiv:2009.04029},
  year={2020}
}

@article{Kowalczyk2005preventing,
  title={Preventing blow-up in a chemotaxis model},
  author={Kowalczyk, Remigiusz},
  journal={Journal of Mathematical Analysis and Applications},
  volume={305},
  number={2},
  pages={566--588},
  year={2005},
  publisher={Elsevier}
}

@article{Jager1992explosions,
  title={On explosions of solutions to a system of partial differential equations modelling chemotaxis},
  author={J{\"a}ger, Willi and Luckhaus, Stephan},
  journal={Transactions of the american mathematical society},
  volume={329},
  number={2},
  pages={819--824},
  year={1992}
}

@article {Masmoudi.2008,
    AUTHOR = {Blanchet, Adrien and Carrillo, Jos\'{e} A. and Masmoudi, Nader},
     TITLE = {Infinite time aggregation for the critical
              {P}atlak-{K}eller-{S}egel model in {$\Bbb R^2$}},
   JOURNAL = {Comm. Pure Appl. Math.},
  FJOURNAL = {Communications on Pure and Applied Mathematics},
    VOLUME = {61},
      YEAR = {2008},
    NUMBER = {10},
     PAGES = {1449--1481},
      ISSN = {0010-3640},
   MRCLASS = {35K50 (35B40 35D05 92C17)},
  MRNUMBER = {2436186},
       DOI = {10.1002/cpa.20225},
       URL = {https://doi.org/10.1002/cpa.20225},
}

@article {Blanchet.2006,
    AUTHOR = {Blanchet, Adrien and Dolbeault, Jean and Perthame, Beno\^{\i}t},
     TITLE = {Two-dimensional {K}eller-{S}egel model: optimal critical mass
              and qualitative properties of the solutions},
   JOURNAL = {Electron. J. Differential Equations},
  FJOURNAL = {Electronic Journal of Differential Equations},
      YEAR = {2006},
     PAGES = {No. 44, 32},
   MRCLASS = {35Q80 (35B45 35D05 92C17)},
  MRNUMBER = {2226917},
MRREVIEWER = {Roberto Natalini},
}

@article {Corrias.2004,
    AUTHOR = {Corrias, L. and Perthame, B. and Zaag, H.},
     TITLE = {Global solutions of some chemotaxis and angiogenesis systems
              in high space dimensions},
   JOURNAL = {Milan J. Math.},
  FJOURNAL = {Milan Journal of Mathematics},
    VOLUME = {72},
      YEAR = {2004},
     PAGES = {1--28},
      ISSN = {1424-9286},
   MRCLASS = {35K50 (35B40 92C17 92C50)},
  MRNUMBER = {2099126},
MRREVIEWER = {Ji\v{r}\'{\i} Nedoma},
       DOI = {10.1007/s00032-003-0026-x},
       URL = {https://doi.org/10.1007/s00032-003-0026-x},
}

@article {Luckhaus.1992,
    AUTHOR = {J\"{a}ger, W. and Luckhaus, S.},
     TITLE = {On explosions of solutions to a system of partial differential
              equations modelling chemotaxis},
   JOURNAL = {Trans. Amer. Math. Soc.},
  FJOURNAL = {Transactions of the American Mathematical Society},
    VOLUME = {329},
      YEAR = {1992},
    NUMBER = {2},
     PAGES = {819--824},
      ISSN = {0002-9947},
   MRCLASS = {35Q80 (35B05 92C45)},
  MRNUMBER = {1046835},
MRREVIEWER = {Gaston E. Hernandez},
       DOI = {10.2307/2153966},
       URL = {https://doi.org/10.2307/2153966},
}

@article {Nagai.1995,
    AUTHOR = {Nagai, Toshitaka},
     TITLE = {Blow-up of radially symmetric solutions to a chemotaxis
              system},
   JOURNAL = {Adv. Math. Sci. Appl.},
  FJOURNAL = {Advances in Mathematical Sciences and Applications},
    VOLUME = {5},
      YEAR = {1995},
    NUMBER = {2},
     PAGES = {581--601},
      ISSN = {1343-4373},
   MRCLASS = {35Q80 (35B05 35K57 92C15)},
  MRNUMBER = {1361006},
MRREVIEWER = {Yoram Schiffmann},
}

@article {Winkler.2019,
    AUTHOR = {Souplet, Philippe and Winkler, Michael},
     TITLE = {Blow-up profiles for the parabolic-elliptic {K}eller-{S}egel
              system in dimensions {$n\geq 3$}},
   JOURNAL = {Comm. Math. Phys.},
  FJOURNAL = {Communications in Mathematical Physics},
    VOLUME = {367},
      YEAR = {2019},
    NUMBER = {2},
     PAGES = {665--681},
      ISSN = {0010-3616},
   MRCLASS = {35M33 (35A01 35B07 35B40 35B44)},
  MRNUMBER = {3936129},
MRREVIEWER = {Philippe Lauren\c{c}ot},
       DOI = {10.1007/s00220-018-3238-1},
       URL = {https://doi.org/10.1007/s00220-018-3238-1},
}

@article {Kiselev.2016,
    AUTHOR = {Kiselev, Alexander and Xu, Xiaoqian},
     TITLE = {Suppression of chemotactic explosion by mixing},
   JOURNAL = {Arch. Ration. Mech. Anal.},
  FJOURNAL = {Archive for Rational Mechanics and Analysis},
    VOLUME = {222},
      YEAR = {2016},
    NUMBER = {2},
     PAGES = {1077--1112},
      ISSN = {0003-9527},
   MRCLASS = {92C17 (35K30 35Q92 76L05)},
  MRNUMBER = {3544323},
MRREVIEWER = {Tomasz Cie\'{s}lak},
       DOI = {10.1007/s00205-016-1017-8},
       URL = {https://doi.org/10.1007/s00205-016-1017-8},
}

@article {Constantin.2008,
    AUTHOR = {Constantin, P. and Kiselev, A. and Ryzhik, L. and Zlato\v{s}, A.},
     TITLE = {Diffusion and mixing in fluid flow},
   JOURNAL = {Ann. of Math. (2)},
  FJOURNAL = {Annals of Mathematics. Second Series},
    VOLUME = {168},
      YEAR = {2008},
    NUMBER = {2},
     PAGES = {643--674},
      ISSN = {0003-486X},
   MRCLASS = {58J65 (35Q35 60J60 76F25 76R50)},
  MRNUMBER = {2434887},
MRREVIEWER = {J. A. Van Casteren},
       DOI = {10.4007/annals.2008.168.643},
       URL = {https://doi.org/10.4007/annals.2008.168.643},
}

@article {Hopf.2018,
    AUTHOR = {Hopf, Katharina and Rodrigo, Jos\'{e} L.},
     TITLE = {Aggregation equations with fractional diffusion: preventing
              concentration by mixing},
   JOURNAL = {Commun. Math. Sci.},
  FJOURNAL = {Communications in Mathematical Sciences},
    VOLUME = {16},
      YEAR = {2018},
    NUMBER = {2},
     PAGES = {333--361},
      ISSN = {1539-6746},
   MRCLASS = {35R11 (35B44 76F25 76R50)},
  MRNUMBER = {3805022},
       DOI = {10.4310/CMS.2018.v16.n2.a2},
       URL = {https://doi.org/10.4310/CMS.2018.v16.n2.a2},
}

@article {Shi.2019,
    AUTHOR = {Shi, Binbin and Wang, Weike},
     TITLE = {Suppression of blow up by mixing in generalized
              {K}eller-{S}egel system with fractional dissipation},
   JOURNAL = {Commun. Math. Sci.},
  FJOURNAL = {Communications in Mathematical Sciences},
    VOLUME = {18},
      YEAR = {2020},
    NUMBER = {5},
     PAGES = {1413--1440},
      ISSN = {1539-6746},
   MRCLASS = {35R11 (35B44 35B45)},
  MRNUMBER = {4159085},
}

@article {Shi.2021,
    AUTHOR = {Shi,Binbin},
     TITLE = {Suppression of blow up by mixing in generalized  Keller-Segel system with fractional dissipation and strong singular kernel},
   JOURNAL = {Submitted arXiv:2103.04484},
  %FJOURNAL = {Submitted arXiv:1908.02489},
  %  VOLUME = {83},
   %   YEAR = {2013},
   % NUMBER = {3},
    % PAGES = {359--374},
    %  ISSN = {0033-3883},
   %MRCLASS = {42B35 (42B10 42B25)},
 % MRNUMBER = {3119672},
%MRREVIEWER = {Anna K. Savvopoulou},
%       DOI = {10.5486/PMD.2013.5529},
 %      URL = {https://doi.org/10.5486/PMD.2013.5529},

@article {Bedrossian.2017,
    AUTHOR = {Bedrossian, Jacob and He, Siming},
     TITLE = {Suppression of blow-up in {P}atlak-{K}eller-{S}egel via shear
              flows},
   JOURNAL = {SIAM J. Math. Anal.},
  FJOURNAL = {SIAM Journal on Mathematical Analysis},
    VOLUME = {49},
      YEAR = {2017},
    NUMBER = {6},
     PAGES = {4722--4766},
      ISSN = {0036-1410},
   MRCLASS = {35M31 (35B40 35B44 35B45)},
  MRNUMBER = {3730537},
MRREVIEWER = {Jingyu Li},
       DOI = {10.1137/16M1093380},
       URL = {https://doi.org/10.1137/16M1093380},
}

@article {Tadmor.2019,
    AUTHOR = {He, Siming and Tadmor, Eitan},
     TITLE = {Suppressing chemotactic blow-up through a fast splitting
              scenario on the plane},
   JOURNAL = {Arch. Ration. Mech. Anal.},
  FJOURNAL = {Archive for Rational Mechanics and Analysis},
    VOLUME = {232},
      YEAR = {2019},
    NUMBER = {2},
     PAGES = {951--986},
      ISSN = {0003-9527},
   MRCLASS = {92C17 (35B44 35B65 35K35 35Q92)},
  MRNUMBER = {3925534},
       DOI = {10.1007/s00205-018-01336-7},
       URL = {https://doi.org/10.1007/s00205-018-01336-7},
}

@article {Tadmor.2021,
    AUTHOR = {Siming He, Eitan Tadmor and Andrej Zlato\v{s}},
     TITLE = {On the Fast Spreading Scenario},
   JOURNAL = {Submitted arXiv:2104.00701},
  %FJOURNAL = {Submitted arXiv:1908.02489},
  %  VOLUME = {83},
   %   YEAR = {2013},
   % NUMBER = {3},
    % PAGES = {359--374},
    %  ISSN = {0033-3883},
   %MRCLASS = {42B35 (42B10 42B25)},
 % MRNUMBER = {3119672},
%MRREVIEWER = {Anna K. Savvopoulou},
%       DOI = {10.5486/PMD.2013.5529},
 %      URL = {https://doi.org/10.5486/PMD.2013.5529},

@article {Zeng.2021,
    AUTHOR = {Zeng, Lan and Zhang, Zhifei and Zi, Ruizhao},
     TITLE = {Suppression of blow-up in
              {P}atlak-{K}eller-{S}egel-{N}avier-{S}tokes system via the
              {C}ouette flow},
   JOURNAL = {J. Funct. Anal.},
  FJOURNAL = {Journal of Functional Analysis},
    VOLUME = {280},
      YEAR = {2021},
    NUMBER = {10},
     PAGES = {108967},
      ISSN = {0022-1236},
   MRCLASS = {35A01 (35B45 35Q30)},
  MRNUMBER = {4222377},
       DOI = {10.1016/j.jfa.2021.108967},
       URL = {https://doi.org/10.1016/j.jfa.2021.108967},
}

@article {He.2018,
    AUTHOR = {He, Siming},
     TITLE = {Suppression of blow-up in parabolic-parabolic
              {P}atlak-{K}eller-{S}egel via strictly monotone shear flows},
   JOURNAL = {Nonlinearity},
  FJOURNAL = {Nonlinearity},
    VOLUME = {31},
      YEAR = {2018},
    NUMBER = {8},
     PAGES = {3651--3688},
      ISSN = {0951-7715},
   MRCLASS = {35K45 (35A01 35B44)},
  MRNUMBER = {3826109},
MRREVIEWER = {Hongwei Chen},
       DOI = {10.1088/1361-6544/aac1ce},
       URL = {https://doi.org/10.1088/1361-6544/aac1ce},
}

@article {Chen.2016,
    AUTHOR = {Chen, Jiao and Li, Yachun and Wang, Weike},
     TITLE = {Global classical solutions to the {C}auchy problem of
              conservation laws with degenerate diffusion},
   JOURNAL = {J. Differential Equations},
  FJOURNAL = {Journal of Differential Equations},
    VOLUME = {260},
      YEAR = {2016},
    NUMBER = {5},
     PAGES = {4657--4682},
      ISSN = {0022-0396},
   MRCLASS = {35K59 (35A09 35B40 35B45 35K15)},
  MRNUMBER = {3437600},
MRREVIEWER = {Christian Stinner},
       DOI = {10.1016/j.jde.2015.11.022},
       URL = {https://doi.org/10.1016/j.jde.2015.11.022},
}

\end{document}